\newtheorem{theorem}{Theorem}[section]
\newtheorem{corollary}[theorem]{Corollary}
\newtheorem{lemma}[theorem]{Lemma}
\newtheorem{proposition}[theorem]{Proposition}
\newtheorem{definition}[theorem]{Definition}
\newtheorem{example}[theorem]{Example}
\newtheorem{remark}[theorem]{Remark}
\newtheorem{assumption}[theorem]{Assumption}
\title{\LARGE \bf
Dynamic Programming Through the Lens of\\ Semismooth Newton-Type Methods (Extended Version)
}
\author{M. Gargiani$^{1}$, A. Zanelli$^{2}$, D. Liao-McPherson$^{1}$, T. Summers$^{3}$, J. Lygeros$^{1}$% <-this % stops a space
\thanks{This
work has been supported by the European Research Council (ERC) under the OCAL project 787845. $^{1}$M. Gargiani, D. Liao-McPherson and J. Lygeros are with the Automatic Control Laboratory at ETH Zurich. $^{2}$A. Zanelli is with the Institute for Dynamic Systems and Control at ETH Zurich. $^{3}$T. Summers is with the Department of Mechanical Engineering at the University of Texas at Dallas. Correspondig author: M. Gargiani {\tt\small gmatilde@ethz.ch}. }
}
\begin{document}

\maketitle
\thispagestyle{empty}
\pagestyle{empty}

%%%%%%%%%%%%%%%%%%%%%%%%%%%%%%%%%%%%%%%%%%%%%%%%%%%%%%%%%%%%%%%%%%%%%%%%%%%%%%%%
\begin{abstract}

Policy iteration and value iteration are at the core of many (approximate) dynamic programming methods. For Markov Decision Processes with finite state and action spaces, we show that they are instances of semismooth Newton-type methods to solve the Bellman equation. In particular, we prove that policy iteration is equivalent to the exact semismooth Newton method and enjoys local quadratic convergence rate. This finding is corroborated by extensive numerical evidence in the fields of control and operations research, which confirms that policy iteration generally requires few iterations to achieve convergence even in presence of a large number of admissible policies.  
We then show that value iteration is an instance of the fixed-point iteration method.
In this spirit, we develop a novel locally accelerated version of value iteration with global convergence guarantees and negligible extra computational costs. 

\end{abstract}

%%%%%%%%%%%%%%%%%%%%%%%%%%%%%%%%%%%%%%%%%%%%%%%%%%%%%%%%%%%%%%%%%%%%%%%%%%%%%%%%
\section{INTRODUCTION}

\label{sec:introduction}
%TODO riferimento a bertsekas -- paper su alpha go.
Approximate dynamic programming (ADP) is a powerful algorithmic strategy to handle stochastic sequential decision making problems arising in a wide range of applications, from control to games and resource allocation, to name a few.
At the core of some of the biggest success stories of ADP is an approximate version of policy iteration~\cite{alphago2016}. In particular, after an extensive offline training phase where an approximation of the optimal cost is produced, one iteration of an approximate version of policy iteration is performed (online learning). Empirical evidence suggests that this final step greatly enhances performance. 
In particular, Bertsekas in~\cite{bertsekas2022} links these success stories to the equivalence between policy iteration and Newton's method. 

The connection between policy iteration and Newton's method dates back to the late 60's~\cite{pollatschek1969}. Puterman and Brumelle~\cite{puterman1979} were among the first who exploited this connection to study the convergence properties of policy iteration for MDPs with continuous action spaces.
More recently, Santos and Ruts~\cite{santos2004} exploited this connection to analyze the asymptotic convergence of policy iteration for the discretization of a specific class of MDPs with continuous spaces. Bertsekas in~\cite{bertsekas2022} provides a graphical analysis of the connection between policy iteration and Newton's method. %, which explains the performance enhancement of the online learning step in light of the fast local convergence of Newton's method. 
He then mathematically formalizes these visual insights by proving local quadratic convergence of policy iteration for Markov Decision Processes (MDPs) with finite state and action spaces. %In light of the connection with Newton's method, the authors were able to conclude local superlinear and quadratic convergence of policy iteration for their application under certain regularity assumptions. %Finally, in~\cite{bokanowski2009} the authors consider MDPs with compact action spaces and, by exploiting the equivalence with the semismooth Newton method, they are able to conclude that policy iteration achieves superlinear convergence under certain regularity conditions.  
These theoretical results are corroborated by numerous computational examples which demonstrate that policy iteration achieves convergence in a remarkably small number of iterations even in presence of rounding errors and a large number of potential policies.
We refer to~\cite{bertsekas2022} for an extensive review of the related works.

In this work, we consider MDPs with finite state and action spaces and we formally show that policy iteration and value iteration are both instances of semismooth Newton-type methods. The main differences between our analysis and that of Bertsekas are that the latter only focus on policy iteration and does not deploy tools from generalized differentiation, but works in a neighborhood of the solution where the iterations can be expressed as the Newton iterations for some auxiliary continuously differentiable mapping.
We then take this connection further by developing a novel version of value iteration inspired by the fixed-point iteration method. In particular, our main contributions are the following. 
\begin{itemize}
\item In Section~\ref{sec:semismooth Newton}, we develop a unified theoretical analysis for the local convergence of semismooth Newton-type methods based on the so-called \textit{kappa condition}~\cite{diehl2016}. %(Section~\ref{sec:semismooth Newton}). 
\item In Sections~\ref{sec:PI and Newton} and~\ref{sec:VI and Gradient}, we formalize mathematically the connection of policy iteration and value iteration with semismooth Newton-type methods using tools from generalized differentiation and results from Section~\ref{sec:background}. We then discuss the significant algorithmic and theoretical implications of this connection. 
\item In Section~\ref{sec: alpha VI}, we design a novel globally convergent and locally accelerated variant of value iteration with negligible additional computational cost per iteration and superior numerical performance.
\end{itemize}

\noindent\textbf{Notation.} In the following, we use $\Vert \cdot \Vert:\mathbb{R}^d\rightarrow \mathbb{R}$ to denote an arbitrary vector norm, $\Vert \cdot\Vert:\mathbb{R}^{d\times d}\rightarrow \mathbb{R}$ for its induced matrix norm, $\mathcal{B}(c, \delta)$ for the Euclidean ball with center $c\in\mathbb{R}^d$ and radius $\delta>0$, $\rho$ for the spectral radius of a matrix, $r'$ for the Jacobian operator of a differentiable function $r:\mathbb{R}^d\rightarrow \mathbb{R}^d$, $\mathbf{1}_d = \begin{bmatrix} 1 &1&\cdots&1\end{bmatrix}^\top\in\mathbb{R}^d$ and $\text{cl}\left(\mathcal{T}\right)$ and $\text{int}\left(\mathcal{T}\right)$ for the closure and the interior of a set $\mathcal{T}\subseteq \mathbb{R}^d$, respectively.

\section{BACKGROUND}
\label{sec:background}
%In this section, we describe the considered problem setting and discuss the necessary background material on dynamic programming, generalized differentiation and semismooth Newton-type methods. We also provide a novel unified convergence analysis of semismooth Newton-type methods based on the kappa-condition.
%\subsection{Problem Setting}
%\label{sec:problem setting}

%\noindent\textbf{Problem Setting.} 
We consider infinite horizon discounted cost problems for MDPs $\left\{\mathcal{S}, \mathcal{A}, P, g, \gamma   \right\}$ comprising a finite state space $\mathcal{S} = \left\{ 1,\dots, n  \right\}$, a finite action space $\mathcal{A} = \left\{ 1,\dots, m  \right\}$, a transition probability function $P:\mathcal{S}\times \mathcal{A} \times \mathcal{S}\rightarrow [0,1]$ that defines the probability of ending in state $s'$ when applying action $a$ in state $s$, a stage-cost function $g:\mathcal{S}\times\mathcal{A}\rightarrow \mathbb{R}$ that associates to each state-action pair a bounded cost, and a discount factor $\gamma\in(0,1)$. Throughout the paper, with a slight abuse of notation we use $\mathcal{A}(s)$ to denote the nonempty subset of actions that are allowed at state $s$, $p_{ss'}(a) = P(s, a, s')$ for the probability of transitioning to state $s'$ when the system is in state $s$ and action $a\in\mathcal{A}(s)$ is selected with $\sum_{s'\in\mathcal{S}} p_{ss'}(a) = 1$ for all $s\in\mathcal{S}$ and $a\in\mathcal{A}(s)$.%, and $g(s, a)$ for the cost associated with the $(s,a)$-pair.

A \textit{deterministic stationary control policy} $\pi: \mathcal{S}\rightarrow \mathcal{A}$ is a function that maps states to actions, with $\pi(s)\in\mathcal{A}(s)$. We use $\Pi$ to denote the set of all deterministic stationary control policies, from now on simply called \textit{policies}. At step $t$ of the decision process under the policy $\pi\in\Pi$, the system is in some state $s_t$ and the action $a_t=\pi(s_t)$ is applied. The discounted cost $\gamma^t g(s_t,a_t)$ is accrued and the system transitions to a state $s_{t+1}$ according to the probability distribution $P(s_t, a_t, \cdot)$. This process is repeated leading to the following cumulative discounted cost
\begin{equation}\label{eq: cost of pi}
V^{\pi}(s) = \lim_{T\rightarrow \infty} \mathbb{E}\left[\, \sum_{t=0}^{T-1} \gamma^t g(s_t, \pi(s_t)) \,\,\Big|\,\,s_0=s\right],
\end{equation}
where $\left\{s_0, \pi(s_0), s_1, \pi(s_1),\dots, s_t,\pi(s_t),\dots \right\}$ is the state-action sequence generated by the MDP under policy $\pi$ with initial state $s_0$, and the expected value is taken with respect to the corresponding probability measure over the space of sequences. The transition probability distributions induced by policy $\pi$ can be compactly represented by the rows of an $n\times n$ row-stochastic matrix $\left[P^{\pi}\right]_{ss'}=p_{ss'}(\pi(s))$ for all $s,s' \in \mathcal{S}$ and the costs induced by policy $\pi$ by the vector $g^{\pi} = \begin{bmatrix}
g(1, \pi(1)) &\cdots & g(n, \pi(n)) 
\end{bmatrix}^\top\in\mathbb{R}^{n}$.
The optimal cost is defined as 
\begin{equation}\label{eq: optimal cost}
V^*(s) \vcentcolon= \min_{\pi\in\Pi} V^{\pi}(s)\quad\forall s \in \mathcal{S}. 
\end{equation}
Any policy $\pi^*\in\Pi$ that attains the optimal cost is called an optimal policy. %, i.e. $V^{\pi^*}(s) = V^*(s)$ for all $s\in\mathcal{S}$, 
Notice that in~\eqref{eq: optimal cost} we restrict our attention to stationary deterministic policies as in our setting there exists a policy in this class that attains $V^*$~\cite{bertsekas2012DPOC}.
The optimal cost admits a \textit{recursive} definition known as the \textit{Bellman equation}
\begin{equation}\label{eq: Bellman equation}
V^*(s) = \min_{a\in \mathcal{A}(s)} \left\{ g(s,a) + \gamma \sum_{s'\in\mathcal{S}} p_{ss'}(a) V^*(s') \right\}\,\, \forall s\in\mathcal{S}.
\end{equation}
%where the minimization over policies can be equivalently re-written as a minimization over admissible control actions. % $a\in\mathcal{A}(s)$. 
Equation~\eqref{eq: cost of pi} admits an analogous recursive definition known as the Bellman equation associated with policy $\pi$. In the considered setting, the cost function associated with policy $\pi$ and the optimal cost function can be represented by $V^{\pi}\in\mathbb{R}^n$ and $V^*\in\mathbb{R}^n$, where the $s$-th element is given by~\eqref{eq: cost of pi} and~\eqref{eq: optimal cost} evaluated at $s$, respectively.
%TODO add definition of greedy policy since it is used for description of policy iteration
%TODO check whether g can be also negative or must be only positive

\subsection{Dynamic Programming}
\label{sec:DP}
Dynamic Programming (DP) comprises the methods for solving stochastic optimal control problems by solving the Bellman equation~\cite{bertsekas2012DPOC}. %In particular, we can recognize three main DP algorithms: value iteration (VI), policy iteration (PI) and the linear programming approach.
%In this work, we focus on VI and PI.
Here we are interested in DP algorithms in the classes of value iteration (VI) and policy iteration (PI). 
Starting from Equation~\eqref{eq: Bellman equation}, we define a nonsmooth mapping $T:\mathbb{R}^n \rightarrow \mathbb{R}^n$, known as the \textit{Bellman operator}, by
\begin{equation*}
(TV)(s) = \min_{a\in\mathcal{A}(s)}\left\{ g(s,a) + \gamma \sum_{s'\in\mathcal{S}} p_{ss'}(a)V(s') \right\}\,\,\forall s\in\mathcal{S}.
\end{equation*}
An analogous linear operator $T^{\pi}:\mathbb{R}^n \rightarrow \mathbb{R}^n$ can be defined for the Bellman equation associated with policy $\pi$ as
\begin{equation*}
(T^{\pi}V)(s) = g(s, \pi(s)) + \gamma \sum_{s'\in\mathcal{S}} p_{ss'}(\pi(s))V(s') \quad\forall s\in\mathcal{S}.
\end{equation*}
Given the cost vector $V$, any policy $\pi$ such that  
\begin{equation}\label{eq: greedy policy}
\pi(s) \in \arg\!\!\min_{a\in\mathcal{A}(s)} \left\{ g(s,a) + \gamma \sum_{s'\in\mathcal{S}} p_{ss'}(a)V(s') \right\} \,\,\forall s\in\mathcal{S}
\end{equation}
is called \textit{greedy} with respect to the cost $V$.
%TODO definition of greedy policy here! 
It can be shown that the Bellman operator is contractive~\cite{bertsekas2012DPOC} and, thanks to the Banach Theorem~\cite{rockafellar76}, admits a unique fixed point $V^*$. Moreover, the corresponding Picard-Banach iteration converges asymptotically to the fixed point from any initial value $V$, i.e.
\begin{equation}\label{eq: value iteration}
\lim_{k\rightarrow \infty} T^k V = V^*.
\end{equation}
This is at the core of VI, which repeatedly applies the $T$ operator starting from an arbitrary finite cost.
%\begin{equation}\label{eq: value iteration}
%\lim_{k\rightarrow \infty} T^k V = V^*\,.
%\end{equation}
%VI is based on Equation~\eqref{eq: value iteration} and consists in repeadetly applying the $T$ operator starting from an arbitrary finite cost. 
The generated sequence linearly converges to $V^*$ with a $\gamma$-contraction rate.   

%policy iteration
An alternative method to solve Equation~\eqref{eq: Bellman equation} is PI (Algorithm~\ref{alg: PI}). With PI, we start from an arbitrary initial policy and alternate \textit{policy evaluation} (step 3) and \textit{policy improvement} (step 4) until convergence. The policy evaluation step at iteration $k$ computes the cost $V^{\pi_k}$ associated with the current policy $\pi_k$. This requires the solution of a system with $n$ linear equations, which is generally computationally demanding for MDPs with large state spaces. The policy is then updated by extracting a greedy policy associated with $V^{\pi_k}$ in the policy improvement step. Unlike VI, PI converges in a finite number of iterations since the policy, and therefore also its cost, are improved at each iteration and since, by the finiteness of $\mathcal{S}$ and $\mathcal{A}$, there only exists a finite number of policies. It is nonetheless important to characterize its convergence rate and asymptotic behavior since, for large state and action spaces, the number of iterations could be prohibitive (exponential in $n$ and $m$). By exploiting the properties of the Bellman operator, we can show that PI is globally $\gamma$-contractive, which is similar to VI. Extensive empirical evidence, however, suggests that PI has superior convergence properties and generally requires considerably fewer iterations than VI. From a computational viewpoint, the per-iteration costs of PI with direct inversion amount to $\mathcal{O}( n ^3 + m\cdot n^2)$ versus the $\mathcal{O}(m\cdot n^2)$ of VI. %This has motivated the design of inexact PI schemes, such as modified PI~\cite{bertsekas2012DPOC}, where the policy evaluation step is carried out approximately via iterative methods.
%inexact / approximated PI such as MPI - trade-off computational costs and convergence 
%TODO change the convergence criterion: policy has converged is not good as multiple different greedy optimal policy with same cost!! optimal cost
\begin{algorithm}[H]
\caption{Exact Policy Iteration}
\label{alg: PI}
\begin{algorithmic}[1]
    \State \textbf{Initialization:} select an arbitrary initial policy $\pi_0$ and set $k=0$
    \While{cost has not converged}
        \State $V^{\pi_k} = (I-\gamma P^{\pi_k})^{-1}g^{\pi_k}$
        
        \State $\pi_{k+1} = \tilde{\pi}$ with $\tilde{\pi}\in\text{GreedyPolicy}(V^{\pi_k})$ according to~\eqref{eq: greedy policy}%\Comment{see Equation~\eqref{eq: greedy policy}}
        \State $k \leftarrow k+1$
    \EndWhile
\end{algorithmic}
\end{algorithm}

\subsection{Generalized Differentiation \& Semismooth Newton-Type Methods}
\label{sec:semismooth Newton}
Consider the following nonlinear root finding problem
\begin{equation}\label{eq: root finding}
r(\theta) = 0\,,
\end{equation}
where $r:\mathbb{R}^d \rightarrow \mathbb{R}^d$ is a locally Lipschitz-continuous vector-valued function. %, that is 
%\begin{equation*}
%r(\theta) = 
%\begin{bmatrix}
%r_1(\theta)\\
%r_2(\theta)\\
%\vdots\\
%r_n(\theta)\\
%\end{bmatrix} \,.
%\end{equation*}
A vector $\theta^*\in\mathbb{R}^d$ that verifies~\eqref{eq: root finding} is called \textit{root} or \textit{solution} of the nonlinear equation~\eqref{eq: root finding}. In general, we can not rely on smooth optimization methods~\cite{izmasolo14} to solve~\eqref{eq: root finding} since $r$ can be nonsmooth, so its Jacobian $r^\prime(\theta)\in\mathbb{R}^{d\times d}$ might not exist. We therefore need to introduce some notions of generalized differentiability from nonsmooth analysis~\cite{clarke1990optimization}, such as the \textit{B-differential} and Clarke's \textit{generalized Jacobian}.
Since $r$ is a locally Lipschitz-continuous map, the Rademacher Theorem~\cite{rademacher1919partielle} implies that it is differentiable almost everywhere and we denote with $\mathcal{M}_r$ the set of all points where $r$ is differentiable. Another fundamental implication of the Rademacher Theorem is the definition of the B-differential of $r$ at $\theta\in\mathbb{R}^d$ as the set 
\begin{equation*}
\partial_B r(\theta) \!=\! \left\{\!J\! \in \mathbb{R}^{d\times d}\big\vert\, \exists\left\{ \theta_k\right\}\!\subset\! \mathcal{M}_r\!:\!\left\{ \theta_k\right\}\!\rightarrow\! \theta ,\left\{ r^\prime(\theta)\right\}\!\rightarrow\! J \right\}.
\end{equation*}
We denote with $\partial r(\theta)$ Clarke's generalized Jacobian of $r$ at $\theta\in\mathbb{R}^d$, which is defined as the convex hull of $\partial_B r(\theta)$. Consequently, $\partial_B r(\theta) \subseteq \partial r(\theta)$. These sets are always nonempty when evaluated at points where the function is Lipschitz continuous~\cite[Proposition 1.51]{izmasolo14}.
If $r$ is continuously differentiable at $\theta$, then $\partial r(\theta) = \partial_B r(\theta) = \left\{ r^{\prime}(\theta) \right\}$. Otherwise, $\partial_B r(\theta)$ and, consequently, $\partial r(\theta)$ are not necessarily singletons.

%calculus rules 
The B-differential and Clarke's generalized Jacobian are of practical interest only if we can compute at least some of their elements. Because of the lack of sharp calculus rules, this can be done only in few cases, depending on the structure of $r$. For instance, consider the class of piecewise continuously differentiable functions on $\mathbb{R}^d$~\cite{10.1145/2491491.2491493}, which is formally characterized by the following definition. 
\begin{definition}[PC$^1$ functions]
Let $f:\mathbb{R}^d\rightarrow \mathbb{R}^o$ be a continuous vector-valued function and $n_p$ be some positive integer. The function $f$ is said to be \textit{piecewise continuously differentiable} of order $1$ (PC$^1$) if there exist finitely many continuously differentiable functions $\left\{ f_i \right\}_{i=1}^{n_p}$ on $\mathbb{R}^d$, called \textit{selection functions}, such that $f(\theta)\in \left\{ f_i(\theta) \right\}_{i=1}^{n_p}$ for all $\theta\in\mathbb{R}^d$. In addition, $f_i$ is \textit{active} at $\bar{\theta}\in\mathbb{R}^n$ if $f(\bar{\theta}) = f_i(\bar{\theta})$ and \textit{essentially active} if $\bar{\theta}\in \text{cl}( \text{int}(  \{  \theta\in\mathbb{R}^d \,:\, f(\theta) = f_i(\theta) \} ) )$. 
%TODO distinguish between active and essentially active
%In addition, $\left\{ f_i\right\}_{i=1}^I$ are called \textit{selection functions} and each set $\mathcal{P}_i \vcentcolon=\left\{\theta\in\mathbb{R}^n\,\big|\,  f(\theta)= f_i(\theta)\right\}$ is called a \textit{piece} of $f$. For any $\theta\in\mathbb{R}^n$, let $\mathcal{I}_A(\theta)\subseteq \left\{1,\dots, I \right\}$ be the set of active indices such that $f(\theta) = f_i(\theta)$; hence $i\in\mathcal{I}_A(\theta)$ if and only if $\theta\in\mathcal{P}_i$ and $\left\{ f_i\right\}_{i\in\mathcal{I}_A(\theta)}$ are called \textit{active selection functions}.
\end{definition}

We denote with $\mathcal{F}_{f}(\bar{\theta})$ the collection of essentially active functions at $\bar{\theta}$.
Piecewise affine functions are an example of PC$^{1}$ functions with affine selection functions and are particularly relevant in the context of DP as it will be discussed in Section~\ref{sec:newton-type DP}. %The Jacobian matrices $\left\{ f_i^{\prime} \right\}_{i=1}^I$ are generally called \textit{selection matrices} of the piecewise affine function $f$.

The following proposition (Lemma 2.10 in~\cite{10.1145/2491491.2491493}) gives a representation of the B-differential for PC$^1$ functions. This representation can be used to determine a $J\in\partial_B f(\theta)$ in cases where we can compute the Jacobian matrix of at least one of the essentially active selection functions at $\theta\in\mathbb{R}^d$.
\begin{proposition}\label{proposition B-differential of PC functions}
Let $f:\mathbb{R}^d\rightarrow \mathbb{R}^o$ be a PC$^1$ function. The B-differential of $f$ at $\theta\in\mathbb{R}^d$ is
$\partial_B f(\theta) = \left\{ f_i^{\prime}(\theta)\,:\,f_i\in\mathcal{F}_f(\theta) \right\} \,.$
%$\mathcal{P}(\bar{\theta}) = \left\{i\,|\, i\in\mathcal{P}(\theta) \,,\,\forall \theta \in \mathcal{N}(\bar{\theta})\right\}$ and $\mathcal{P}(\theta)$ is the subset of $\left\{1,\dots,I \right\}$ consisting of indices $i$ such that $\theta\in\mathcal{P}_i$.
\end{proposition}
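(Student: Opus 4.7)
The plan is to prove equality between the two sets by showing both inclusions and exploiting the finiteness of the selection functions together with the density of the ``essentially active'' regions.

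For the inclusion $\{f_i'(\theta):f_i\in\mathcal{F}_f(\theta)\}\subseteq \partial_B f(\theta)$, I would pick an essentially active selection $f_i$ and unfold the definition: since $\theta\in\mathrm{cl}(\mathrm{int}(\{\vartheta:f(\vartheta)=f_i(\vartheta)\}))$, there is a sequence $\theta_k\to\theta$ with each $\theta_k$ in the interior of the set where $f$ coincides with $f_i$. In a neighborhood of such a $\theta_k$, $f\equiv f_i$, so $f$ inherits the $C^1$ smoothness of $f_i$ at $\theta_k$; in particular $\theta_k\in\mathcal{M}_f$ and $f'(\theta_k)=f_i'(\theta_k)$. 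Continuity of $f_i'$ then yields $f'(\theta_k)\to f_i'(\theta)$, which by the definition of the B-differential proves $f_i'(\theta)\in\partial_B f(\theta)$.

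For the reverse inclusion, I would start with an arbitrary $J\in\partial_B f(\theta)$ and a witnessing sequence $\theta_k\in\mathcal{M}_f$, $\theta_k\to\theta$, $f'(\theta_k)\to J$. The key step is to associate with each $\theta_k$ an index $i(k)$ such that $\theta_k$ lies in the \emph{interior} of $\{\vartheta:f(\vartheta)=f_{i(k)}(\vartheta)\}$ and $f'(\theta_k)=f_{i(k)}'(\theta_k)$. Once this is done, the pigeonhole principle applied to the finite family $\{f_i\}_{i=1}^{n_p}$ lets me extract a subsequence along which $i(k)\equiv i$ is constant; passing to the limit gives $\theta\in\mathrm{cl}(\mathrm{int}(\{\vartheta:f(\vartheta)=f_i(\vartheta)\}))$, so $f_i\in\mathcal{F}_f(\theta)$, and the continuity of $f_i'$ yields $f'(\theta_k)=f_i'(\theta_k)\to f_i'(\theta)=J$.

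The main obstacle is precisely the construction of this index $i(k)$: one has to rule out the pathological case in which $\theta_k$ is a differentiability point of $f$ that sits on the boundary between two or more regions $\{f=f_i\}$. The way I would resolve this is to observe that each set $E_i\vcentcolon=\{\vartheta:f(\vartheta)=f_i(\vartheta)\}$ is closed (by continuity of $f$ and $f_i$) and that $\bigcup_{i=1}^{n_p} E_i=\mathbb{R}^d$; since a finite union of closed sets covering $\mathbb{R}^d$ has $\bigcup_i \mathrm{int}(E_i)$ dense in $\mathbb{R}^d$, I can perturb each $\theta_k$ within a ball of radius $1/k$ and inside $\mathcal{M}_f$ (still of full measure by Rademacher) to land in some $\mathrm{int}(E_{i(k)})$, where the local identity $f\equiv f_{i(k)}$ forces $f'(\theta_k)=f_{i(k)}'(\theta_k)$. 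The perturbed sequence still converges to $\theta$ and the associated Jacobians still tend to $J$, so the pigeonhole argument above applies and the two inclusions together give the claimed representation.
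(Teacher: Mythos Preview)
The paper does not supply its own proof of this proposition; it is quoted verbatim as Lemma~2.10 of the cited reference and used as a black box. So there is nothing in the paper to compare against, and your argument has to be judged on its own.

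Your first inclusion is clean and correct. The problem is in the reverse inclusion, at the sentence ``the perturbed sequence still converges to $\theta$ and the associated Jacobians still tend to $J$.'' After you replace each $\theta_k$ by some $\tilde\theta_k\in\mathrm{int}(E_{i(k)})$ you indeed get $\tilde\theta_k\to\theta$ and $f'(\tilde\theta_k)=f_{i(k)}'(\tilde\theta_k)$, and along a pigeonholed subsequence this converges to $f_i'(\theta)$ for some fixed essentially active $i$. But nothing in your construction links $f'(\tilde\theta_k)$ to $f'(\theta_k)$: the original $\theta_k$ may lie on the common boundary of several regions $E_j$, and an arbitrarily small perturbation can drop you into a piece whose Jacobian differs from $f'(\theta_k)$ by a fixed amount. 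Thus your pigeonhole limit is \emph{some} element of $\{f_i'(\theta):f_i\in\mathcal{F}_f(\theta)\}$, but you have not shown it is the particular $J$ you started from.

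The standard way around this avoids perturbing the sequence. One first establishes, using the directional-derivative description of PC$^1$ maps, that at every differentiability point $\theta_k$ the derivative $f'(\theta_k)$ already coincides with $f_j'(\theta_k)$ for some $j$ essentially active at $\theta_k$. With that lemma in hand you apply pigeonhole directly to the \emph{original} sequence, use that $\mathrm{cl}(\mathrm{int}(E_j))$ is closed to carry essential activity from $\theta_k$ to $\theta$, and conclude $J=\lim f'(\theta_k)=\lim f_j'(\theta_k)=f_j'(\theta)$ by continuity of $f_j'$. That intermediate lemma is the genuine missing ingredient; without it the second inclusion does not close.
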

\begin{example}\label{example}
Consider the following piecewise affine function: $f(\theta) = 2 \theta - 5$ if $\theta>5$, $f(\theta) = \theta$ if $\theta = 5$ and $f(\theta) = -2 \theta + 15$ if $\theta<5$.
%$
%f(\theta) = 
%\begin{cases}
%2 \theta - 5 & \theta>5\\
%\theta  & \theta=5\\
%-2 \theta + 15 & \theta< 5\,.\\
%\end{cases}
%$
Then $\partial_B f(5) = \left\{2, -2\right\}$ since $\text{int}(\{ \theta\in \mathbb{R}\,:\,f(\theta) = \theta \}) = \emptyset$ and $\partial_B f(\theta) = f'(\theta)$ for all $\theta\in\mathbb{R}\setminus \left\{5\right\}$.
%TODO add thrid piece with pointwise valid function that will not belong to closure of interior
\end{example}

We refer to~\cite{10.1145/2491491.2491493} for more details on the computation of elements in Clarke's generalized Jacobian for piecewise continuous functions and to Chapter 1 in~\cite{izmasolo14} for functions with different structures.

The Newton method~\cite{izmasolo14} is not directly applicable to solve~\eqref{eq: root finding} because of the nonsmoothness. The extension of the Newton method to nonsmooth equations dates back to at least~\cite{kummer88} and is generally known as the semismooth Newton method~\cite{qi1993nonsmooth},~\cite{izmasolo14}. Similarly to the Newton method, instead of solving directly~\eqref{eq: root finding}, the semismooth Newton method solves a series of linear equations that locally approximate~\eqref{eq: root finding}, but the Jacobian matrix in the Newtonian iteration system is replaced by an element from Clarke's generalized Jacobian. In particular, the semismooth Newton method generates a sequence of iterates $\left\{ \theta_k \right\}$ where $\theta_0\in\mathbb{R}^d$ is the initial approximation of the root and, for any $k\geq 0$, $\theta_{k+1}$ is computed as a solution of the linear equation
$
r(\theta_k) + J_k \left( \theta_{k+1} - \theta_k \right)= 0\,,
$
with $J_k\in\partial r(\theta_k)$. When $J_k$ is nonsingular, then the iterate $\theta_{k+1}$ can be computed in closed-form as follows
\begin{equation}\label{eq: semismooth Newton iteration}
\theta_{k+1} = \theta_k - J_k^{-1}r(\theta_k)\,.
\end{equation}
Under certain assumptions, the semismooth Newton method enjoys fast local quadratic convergence, but the cost per iteration with direct inversion is in the order of $\mathcal{O}(d^3)$. In addition, as discussed, it may be difficult to obtain an element from Clarke's generalized Jacobian. 
These are some of the main motivations behind the design of different variants of the semismooth Newton method of the form
\begin{equation}\label{eq: Newton-type iteration}
r(\theta_k) + B_k(\theta_{k+1} - \theta_k) = 0\,,
\end{equation}  
where $B_k\in\mathbb{R}^{d \times d}$. These variants, collectively known as semismooth Newton-type methods~\cite{izmasolo14}, can lead to lower computational costs while maintaining acceptable convergence rates.
Clearly, if $B_k \in \partial r(\theta_k)$, then we recover the semismooth Newton method. Among the most frequently used semismooth Newton-type methods, we recall the \textit{fixed-point iteration method}, where $B_k = \alpha_k I$ with $\alpha_k\neq 0$~\cite{facchineipangvol22003}. 
%\begin{itemize}
%\item \textit{Fixed-Point Iteration Method,} where $B_k = \alpha_k I$ with $\alpha_k\neq 0$. %With this approximation, Equation~\eqref{eq: Newton-type iteration} reduces to $r(\theta_k) + \alpha_k I (\theta_{k+1} - \theta_k) = 0.$
%\item \textit{Gauss-Newton and Levenberg-Marquardt:}
%\item \textit{Semismooth Quasi-Newton Method,} where $\left\{ B_k \right\}$ is a sequence of matrices that satisfies a Dennis-Mor\'e-type condition~\cite{Dennis1973ACO}.%, \textit{i.e.}
%$\min_{J_k \in \partial r(\theta_k)} \,\Vert \left(B_k - J_k \right)\left( \theta_{k+1} - \theta_k \right)\Vert = o\left( \Vert \theta_{k+1} - \theta_k \Vert \right)$ as $k\rightarrow \infty$.
%\end{itemize}

Before proceeding with the formal characterization of the local convergence rate of semismooth Newton-type methods, we need to introduce the notions of strong semismoothness~\cite[Subsection 1.4.2]{izmasolo14} and CD-regularity~\cite[Remark 1.65]{izmasolo14}.
\begin{definition}[strong semismoothness]
A function $f:\mathbb{R}^d \rightarrow \mathbb{R}^o$ is strongly semismooth at $\theta\in\mathbb{R}^d$ if it is locally Lipschitz-continuous at $\theta$, directionally differentiable at $\theta$ in every direction, and the following estimate holds as $\xi \in\mathbb{R}^d$ tends to zero
\vspace{-0.3cm}
$$ \sup_{J\in\partial f(\theta + \xi)}\Vert f(\theta + \xi) - f(\theta) - J\xi \Vert = \mathcal{O}(  \Vert \xi \Vert^2 )\,. $$
\end{definition}

\begin{definition}[CD/BD-regularity]
A function $f:\mathbb{R}^d\rightarrow \mathbb{R}^o$ is CD-regular (BD-regular) at $\theta\in\mathbb{R}^d$ if each matrix $J\in\partial f(\theta)$ ($J\in\partial_B f(\theta)$) is nonsingular.
\end{definition}
The function in Example~\ref{example} is strongly semismooth and BD-regular everywhere, but not CD-regular at $\theta=5$, since $0\in\partial f(5)$.

The following theorem characterizes the local contraction of a semismooth Newton-type sequence generated by Algorithm~\ref{alg: semismooth Newton-type}. Similar \textit{a-posteriori} results based on perturbation analysis can be found in~\cite{izmasolo14}.

\begin{theorem}\label{theorem local contraction}
Let $r:\mathbb{R}^d\rightarrow \mathbb{R}^d$ be strongly semismooth at $\theta^*\in\mathbb{R}^d$, $L>0$ and $\kappa \in [0,1)$ a constant. Then the following statements hold.
\begin{enumerate}
\item For any nonsingular matrix $B\in\mathbb{R}^{d\times d}$ such that $\Vert B^{-1} \Vert \leq L$ and $\exists\,\, J\in\partial r(\theta)$ for which $\Vert B^{-1}\left( B - J\right) \Vert \leq \kappa$, then 
\begin{equation}\label{eq: local contraction first assertion}
\Vert \theta - B^{-1}r(\theta) - \theta^*  \Vert \leq \kappa \,\Vert \theta - \theta^* \Vert + \mathcal{O}( \Vert  \theta - \theta^* \Vert^2)\,.
\end{equation}
 
\item There exist an open neighborhood of $\theta^*$ such that, for any $\theta_0$ in the neighborhood and any sequence of nonsingular matrices $\left\{ B_k\right\}\subseteq \mathbb{R}^{d\times d}$ such that, for all $k$, $\Vert B_k^{-1} \Vert\leq L$ and $\exists\,\, J_k\in\partial r(\theta_k)$ for which the kappa condition
%\vspace{-0.2cm}
\begin{equation}\label{eq: kappa_condition}
\Vert B_k^{-1}\left( B_k - J_k \right) \Vert \leq \kappa_k \leq \kappa 
\end{equation}

%\vspace{-0.4cm}
is verified, the sequence $\left\{\theta_k\right\}\subseteq \mathbb{R}^d$ generated by Algorithm~\ref{alg: semismooth Newton-type} converges to $\theta^*$ and
%\vspace{-0.2cm}
\begin{equation}\label{eq: local contraction}
\Vert \theta_{k+1} - \theta^*  \Vert \leq \kappa_k \,\Vert \theta_k - \theta^* \Vert + \mathcal{O}( \Vert  \theta_k - \theta^* \Vert^2)\,.
\end{equation}
\end{enumerate} 
 
\end{theorem}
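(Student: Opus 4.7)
The plan is to prove assertion~(1) as a one-step bound obtained by combining the strong semismoothness of $r$ at $\theta^*$ with the kappa condition, and then to bootstrap it into assertion~(2) by a standard induction that also keeps the iterates inside a sufficiently small neighborhood of $\theta^*$. Although not stated explicitly, \eqref{eq: local contraction first assertion} forces $r(\theta^*)=0$ (evaluate it at $\theta=\theta^*$), which is consistent with $\theta^*$ being a root of~\eqref{eq: root finding} and the target of Algorithm~\ref{alg: semismooth Newton-type}.

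\textbf{Proof of assertion~(1).} Set $\xi := \theta - \theta^*$. Strong semismoothness of $r$ at $\theta^*$, applied to the particular $J \in \partial r(\theta^*+\xi)$ supplied by the hypothesis, gives $r(\theta) = J\xi + e(\xi)$ with $\|e(\xi)\| = \mathcal{O}(\|\xi\|^2)$. A direct algebraic manipulation then yields
\begin{equation*}
\theta - B^{-1} r(\theta) - \theta^* \;=\; \xi - B^{-1}\bigl(J\xi + e(\xi)\bigr) \;=\; B^{-1}(B - J)\,\xi \;-\; B^{-1} e(\xi).
\end{equation*}
Taking norms, invoking the triangle inequality together with the hypotheses $\|B^{-1}(B-J)\| \leq \kappa$ and $\|B^{-1}\| \leq L$, delivers exactly~\eqref{eq: local contraction first assertion}, with an implicit constant in the $\mathcal{O}$ term that depends only on $L$ and on the strong-semismoothness modulus of $r$ at $\theta^*$.

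\textbf{Proof of assertion~(2).} Let $C>0$ be an explicit constant absorbing the $\mathcal{O}$ term in~\eqref{eq: local contraction first assertion}; by the previous step $C$ depends only on $r$ and $L$, not on the specific $B$ used. Choose $\delta > 0$ so small that $C \delta \leq (1-\kappa)/2$. Then, for any $\theta_k \in \mathcal{B}(\theta^*, \delta)$, assertion~(1) applied with $B=B_k$ and $J=J_k$ gives
\begin{equation*}
\|\theta_{k+1} - \theta^*\| \;\leq\; \kappa_k\,\|\theta_k - \theta^*\| + C\,\|\theta_k - \theta^*\|^2 \;\leq\; \tfrac{1+\kappa}{2}\,\|\theta_k - \theta^*\|,
\end{equation*}
which simultaneously shows $\theta_{k+1} \in \mathcal{B}(\theta^*, \delta)$ and exhibits the uniform contraction factor $(1+\kappa)/2 < 1$. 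Starting from any $\theta_0 \in \mathcal{B}(\theta^*, \delta)$ the induction therefore produces a sequence converging to $\theta^*$, and~\eqref{eq: local contraction} is just~\eqref{eq: local contraction first assertion} reapplied at each iterate.

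\textbf{Expected main obstacle.} The delicate point is uniformity of the higher-order remainder across all admissible choices of $(B_k, J_k)$: this is exactly what the bound $\|B_k^{-1}\| \leq L$ buys, since it lets us fix a single $C$ in front of $\|\xi\|^2$, which is what makes the inductive neighborhood argument in~(2) work. Equally crucially, the strong-semismoothness estimate is stated as a \emph{supremum} over $J \in \partial r(\theta^*+\xi)$, so it applies to whichever $J_k$ is furnished by the algorithm; without this uniformity the induction would remain purely existential and would not yield a quantitative contraction at every step.
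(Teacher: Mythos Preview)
Your proof is correct and follows essentially the same approach as the paper: decompose the one-step error as $B^{-1}(B-J)(\theta-\theta^*)$ minus $B^{-1}$ times the strong-semismoothness remainder, bound each piece via the kappa condition and $\|B^{-1}\|\leq L$, and then bootstrap into~(2) by induction inside a sufficiently small ball. The paper's argument differs only cosmetically---it writes the remainder explicitly as $r(\theta)-r(\theta^*)-J(\theta-\theta^*)$ rather than naming it $e(\xi)$, and phrases the neighborhood step for an arbitrary $q\in(\kappa,1)$ instead of fixing $q=(1+\kappa)/2$---while your remark on the uniformity of the $\mathcal{O}$ constant across choices of $(B_k,J_k)$ makes explicit a point the paper leaves implicit.
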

\vspace{0.1cm}
\begin{proof}
We start by proving the first assertion. Since $r(\theta^*)=0$,
\begin{equation*}
\begin{aligned}
    \theta \!-\! B^{-1}r(\theta) - \theta^* 
    &\!= B^{-1}\!\left(B (\theta - \theta^*) \right)- B^{-1}\!\left( r(\theta) - r(\theta^*) \right).
\end{aligned}
\end{equation*}
%\begin{equation*}
%\begin{aligned}
%    \theta - B^{-1}r(\theta) - \theta^* 
%    &= \theta - \theta^* - B^{-1}\left( r(\theta) - r(\theta^*) \right)\\
%    &= B^{-1}\left(B (\theta - \theta^*) \right)- B^{-1}\left( r(\theta) - r(\theta^*) \right)\,.
%\end{aligned}
%\end{equation*}
We now add and subtract the term $B^{-1}J(\theta - \theta^*)$, where $J\in\partial r(\theta)$ such that $\Vert B^{-1}\left( B - J \right) \Vert \leq \kappa$ 
\begin{equation}\label{eq: eq1 theorem}
\begin{aligned}
    \theta - B^{-1}r(\theta)& - \theta^* 
     = B^{-1}(B - J)(\theta - \theta^*) \\
    &\quad- B^{-1}(r(\theta) - r(\theta^*) - J(\theta - \theta^*))\,.
    \end{aligned}
\end{equation}
By taking the norm on both sides of Equation~\eqref{eq: eq1 theorem}, we obtain
\begin{gather}\label{eq: eq1 theorem semismooth newton-type}
\begin{aligned}
    &\Vert \theta - B^{-1}r(\theta) - \theta^* \Vert \\
    &= \Vert
     B^{-1}(B -\! J)(\theta - \theta^*) \!-\! B^{-1}(r(\theta) - r(\theta^*)\!-\! J(\theta - \theta^*))\Vert \\
      & \overset{(a)}{\leq} \!\Vert
     B^{-1}(B - J)(\theta - \theta^*) \Vert \\
     &\quad\quad\!+ \Vert B^{-1}(r(\theta) - r(\theta^*) - J(\theta - \theta^*))\Vert  \\
     &\overset{(b)}{\leq} \!\Vert
     B^{-1}(B - J)\Vert \Vert \theta - \theta^*\Vert\\
     &\quad\quad\!+ \Vert B^{-1}\Vert \Vert r(\theta) - r(\theta^*) - J(\theta - \theta^*)\Vert  \\
     &\leq \Vert
     B^{-1}(B - J)\Vert \Vert \theta - \theta^*\Vert\\
     &\quad\quad+ L \Vert (r(\theta) - r(\theta^*) - J(\theta - \theta^*))\Vert \\
     &\overset{(c)}{=}\Vert
     B^{-1}(B - J)\Vert \Vert \theta - \theta^*\Vert  +  \mathcal{O} (\Vert \theta - \theta^*\Vert^2)\,,
\end{aligned}\raisetag{4.5\baselineskip}
\end{gather}
where $(a)$ follows from the triangle inequality, $(b)$ from the sub-multiplicativity of the norm and $(c)$ from the strong semismoothness of $r$.
The final result follows from that fact that $\Vert B^{-1}\left(B - J \right) \Vert \leq \kappa$. % We complete the proof of assertion 1) by exploiting the relation between $B$ and $J$. 
%\begin{equation}\label{eq: eq 2 th}
%\Vert \theta - B^{-1}r(\theta) - \theta^* \Vert %\leq \kappa \Vert \theta - \theta^* \Vert +  %\mathcal{O} (\Vert \theta - \theta^*\Vert^2)\,.
%\end{equation}
For $\theta_k\in \mathbb{R}^d$, Equation~\eqref{eq: Newton-type iteration} has a unique solution $\theta_{k+1}$ given by~\eqref{eq: Newton-type iteration closed-form}. In addition, from~\eqref{eq: local contraction first assertion} it follows that for any $q\in (\kappa,1)$, there exists $\delta>0$ such that the inclusion $\theta_k\in \mathcal{B}(\theta^*, \delta )$ implies that $\Vert \theta_{k+1} - \theta^* \Vert\leq q \Vert \theta_k - \theta^*  \Vert$ and therefore $\theta_{k+1} \in \mathcal{B}(\theta^*, \delta)$.
It follows that any starting point $\theta_0\in \mathcal{B}(\theta^*, \delta)$ uniquely deﬁnes a speciﬁc sequence of iterates $\left\{\theta_k \right\}$ of Algorithm~\ref{alg: semismooth Newton-type}; this sequence is contained in $\mathcal{B}(\theta^*, \delta)$ and converges to $\theta^*$. Finally, starting from~\eqref{eq: eq1 theorem semismooth newton-type} and by exploiting~\eqref{eq: Newton-type iteration closed-form} and the kappa condition, we obtain~\eqref{eq: local contraction}.~~\QED
\end{proof}
%TODO comments on the various local rates one can obtain -- semismooth Quasi-Newton or Fixed-Point Iteration 
Theorem~\ref{theorem local contraction} shows that the local convergence rate of semismooth Newton-type methods strongly depends on the choice of $\left\{ B_k \right\}$. In particular, we obtain quadratic convergence if $\kappa = 0$, superlinear convergence if $\kappa_k\rightarrow 0$ as $k\rightarrow \infty$ and linear convergence if $\kappa_k = \kappa$ for all $k$ with $\kappa \in (0, 1)$. 

The following corollary characterizes the local convergence of the exact semismooth Newton method (see also Theorem 2.42 in~\cite{izmasolo14}).

%TODO semismooth Newton method local contraction:  CD-regularity assumption!
\begin{corollary}\label{corollary Newton local contraction}
Let $r$ be strongly semismooth and CD-regular at $\theta^*$. Provided that $\theta_0$ is close enough to $\theta^*$, the sequence $\left\{ \theta_k\right\}$ generated by the semismooth Newton method iteration~\eqref{eq: semismooth Newton iteration} with starting point $\theta_0$ converges to $\theta^*$ according to
\vspace{-0.2cm}
\begin{equation*}
\Vert \theta_{k+1} - \theta^* \Vert = \mathcal{O}(\Vert \theta_k - \theta^* \Vert^2 )\,.
\end{equation*}
\end{corollary}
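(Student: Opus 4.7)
The plan is to reduce the corollary to Theorem~\ref{theorem local contraction} by verifying that the exact semismooth Newton choice $B_k = J_k \in \partial r(\theta_k)$ satisfies all the hypotheses in a small enough neighborhood of $\theta^*$ with $\kappa_k \equiv 0$. Once this is done, the quadratic rate is immediate from~\eqref{eq: local contraction}, since the linear term drops out and only the $\mathcal{O}(\|\theta_k - \theta^*\|^2)$ remainder survives.

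First I would observe that the kappa condition is trivially satisfied for the exact method: with $B_k = J_k$, we have $\|B_k^{-1}(B_k - J_k)\| = 0$, so we can take $\kappa_k = 0$ and $\kappa = 0$ in Theorem~\ref{theorem local contraction}. This immediately handles the contraction constant; the only remaining work is to secure the two structural conditions of the theorem, namely that the iteration~\eqref{eq: semismooth Newton iteration} is well defined (each $J_k$ nonsingular) and that there is a uniform bound $\|J_k^{-1}\| \leq L$ for all $\theta_k$ in some neighborhood of $\theta^*$.

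Both of these follow from a standard perturbation argument for Clarke's generalized Jacobian. By CD-regularity at $\theta^*$, every $J \in \partial r(\theta^*)$ is nonsingular. Since $r$ is locally Lipschitz, the set-valued map $\theta \mapsto \partial r(\theta)$ is compact-valued and upper semicontinuous (a classical property of Clarke's generalized Jacobian, see e.g.\ Proposition~1.51 in~\cite{izmasolo14}), and the map $J \mapsto \|J^{-1}\|$ is continuous on the open set of nonsingular matrices. Combining these facts, there exists $\delta_0 > 0$ and $L > 0$ such that, for every $\theta \in \mathcal{B}(\theta^*, \delta_0)$, every $J \in \partial r(\theta)$ is nonsingular with $\|J^{-1}\| \leq L$. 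In particular, for any iterate $\theta_k \in \mathcal{B}(\theta^*, \delta_0)$, any choice of $J_k \in \partial r(\theta_k)$ yields a nonsingular $B_k = J_k$ with $\|B_k^{-1}\| \leq L$.

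With these two bounds in hand, the hypotheses of Theorem~\ref{theorem local contraction}~(2) are met with $\kappa_k = 0$ on $\mathcal{B}(\theta^*, \delta_0)$. Shrinking $\delta_0$ if necessary to land inside the neighborhood of attraction guaranteed by Theorem~\ref{theorem local contraction}, any $\theta_0$ sufficiently close to $\theta^*$ produces a well-defined sequence $\{\theta_k\}$ converging to $\theta^*$ and, by~\eqref{eq: local contraction} with $\kappa_k = 0$, satisfying $\|\theta_{k+1} - \theta^*\| = \mathcal{O}(\|\theta_k - \theta^*\|^2)$. The main obstacle in this plan is precisely the uniform invertibility/bound step: the upper semicontinuity of $\partial r$ at a CD-regular point is what lets us upgrade pointwise nonsingularity at $\theta^*$ into a uniform bound on a neighborhood, and without it the conclusion would only be asymptotic rather than giving a genuine quadratic estimate at every iteration.
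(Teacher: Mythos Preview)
Your proposal is correct and follows essentially the same route as the paper: you invoke upper semicontinuity and compactness of $\partial r$ at a CD-regular point (the paper cites Proposition~1.51 and Lemma~A.6 in~\cite{izmasolo14} for exactly this) to obtain a uniform bound $\|J^{-1}\|\le L$ on a neighborhood, then apply Theorem~\ref{theorem local contraction} with $B_k=J_k$ and $\kappa_k=0$. The only cosmetic difference is that the paper packages the uniform-invertibility step into two cited results rather than spelling out the upper-semicontinuity argument.
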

\vspace{0.1cm}
\begin{proof}
From Proposition 1.51 and Lemma A.6 in~\cite{izmasolo14} it follows that there exists a neighborhood $U$ of $\theta^*$ and a finite constant $L>0$ such that $J$ is nonsingular and $\Vert J^{-1} \Vert\leq L$ for all $J \in \partial r(\theta)$ and for all $\theta\in U$. The final result follows from Theorem~\ref{theorem local contraction} by setting $B_k = J_k$ and considering $\theta_0\in\mathcal{B}(\theta^*, \delta)$ with $\delta$ sufficiently small such that $\mathcal{B}(\theta^*, \delta)\subset U$.~~\QED
\end{proof}
%TODO remark on BD-regularity
\begin{remark}\label{remark CD and BD regularity}
If at each iteration of the semismooth Newton method we select $J_k$ from $\partial_B r(\theta_k)$, then the CD-regularity assumption can be replaced by the weaker assumption of BD-regularity of $r$ at $\theta^*$. The proof is analogous but instead of considering $J_k \in \partial r(\theta_k)$ we consider $J_k \in \partial_B r(\theta_k)$. See~\cite[Remark 2.54]{izmasolo14} for a more detailed discussion.
\end{remark}
\begin{algorithm}[H]
\caption{Semismooth Newton-Type Method}
\label{alg: semismooth Newton-type}
\begin{algorithmic}[1]
    \State \textbf{Initialization:} select $\theta_0\in\mathbb{R}^d$, $tol\geq 0$ and set $k=0$
    \While{$\Vert r(\theta_k) \Vert>tol$}
        \State select $B_k\in\mathbb{R}^{d\times d}$ nonsingular and compute 
        \begin{equation}\label{eq: Newton-type iteration closed-form}
        \theta_{k+1} = \theta_{k} - B_k^{-1}r(\theta_k)
        \end{equation}
        
        \State $k \leftarrow k+1$
    \EndWhile
\end{algorithmic}
\end{algorithm}

\section{SEMISMOOTH NEWTON-TYPE DYNAMIC PROGRAMMING}
\label{sec:newton-type DP}
In this section we formalize the connection of PI and VI with semismooth Newton-type methods. Such a connection has far-reaching consequences. By adopting this different perspective on DP methods, we can indeed deploy the well-established semismooth Newton-type theory to analyze existing DP methods and design novel ones, with favorable local contraction rates and efficient iterations. 

We start by looking at the Bellman equation~\eqref{eq: Bellman equation} as a nonlinear root finding problem, where $r(\theta) = \theta - T\theta$, $r:\mathbb{R}^{n}\rightarrow \mathbb{R}^n$ and the $s$-th component is \begin{equation*}
\theta_s - \!\!\min_{a\in\mathcal{A}(s)} \!\left\{g(s,a) + \gamma \sum_{s'=1}^n p_{ss'}(a) \theta_j
\right\}\,.
\end{equation*}
We call $r$ the \textit{Bellman residual function}.
\begin{figure}[h]
\centering
\includegraphics[width=0.45\textwidth]{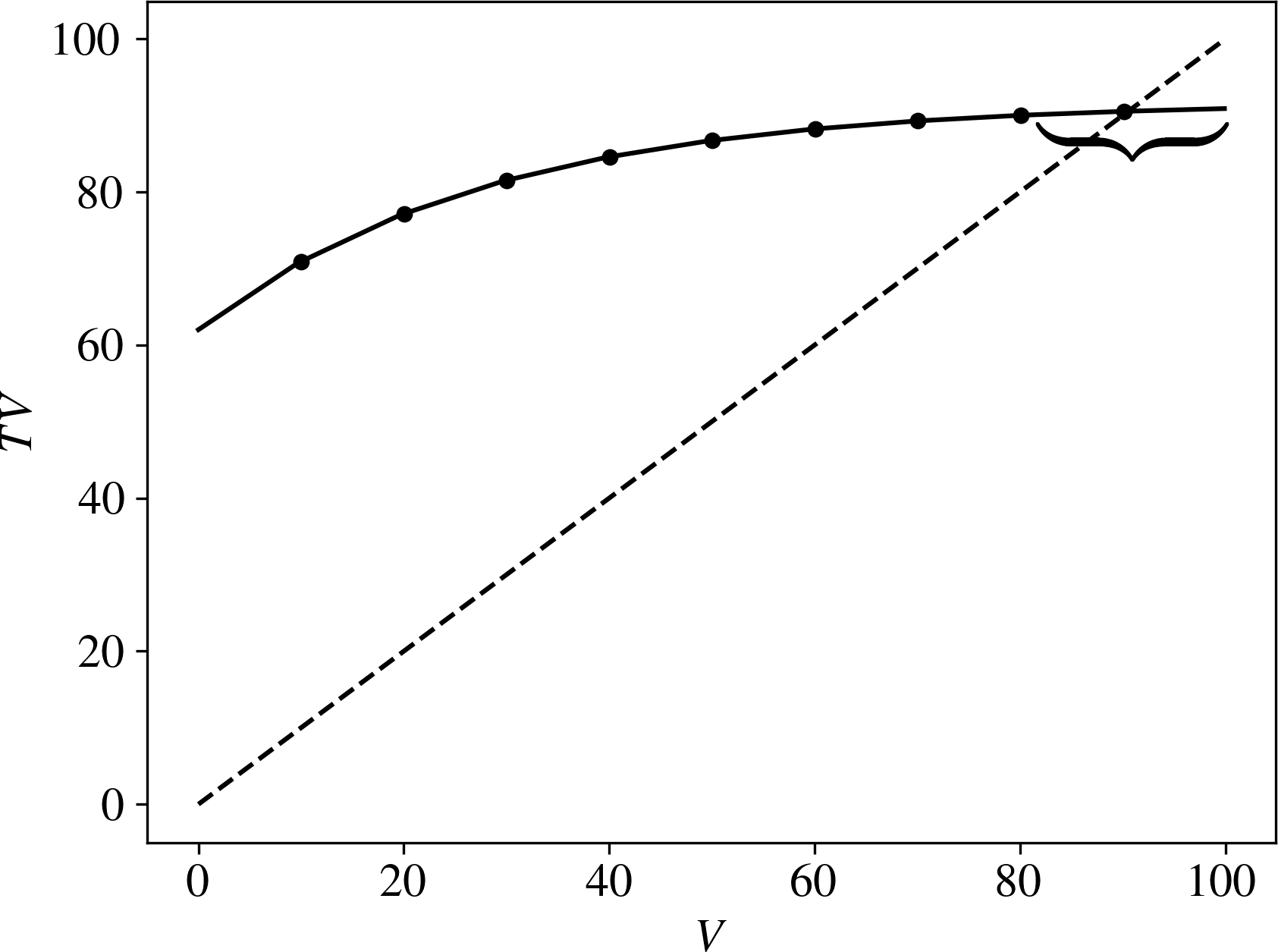}
\caption{Visualization of the Bellman operator $T$ and corresponding Bellman equation for a 1-dimensional case. The optimal cost $V^*$ corresponds to the intersection point of the graph of $TV$ with the 45 degree line. Bertsekas in~\cite{bertsekas2022} proves local quadratic convergence of PI for a region that is included in the segments within the curly brackets.}
\label{figure: pieces_BE}
\end{figure}
\begin{figure}[h]
\centering
\includegraphics[width=0.45\textwidth]{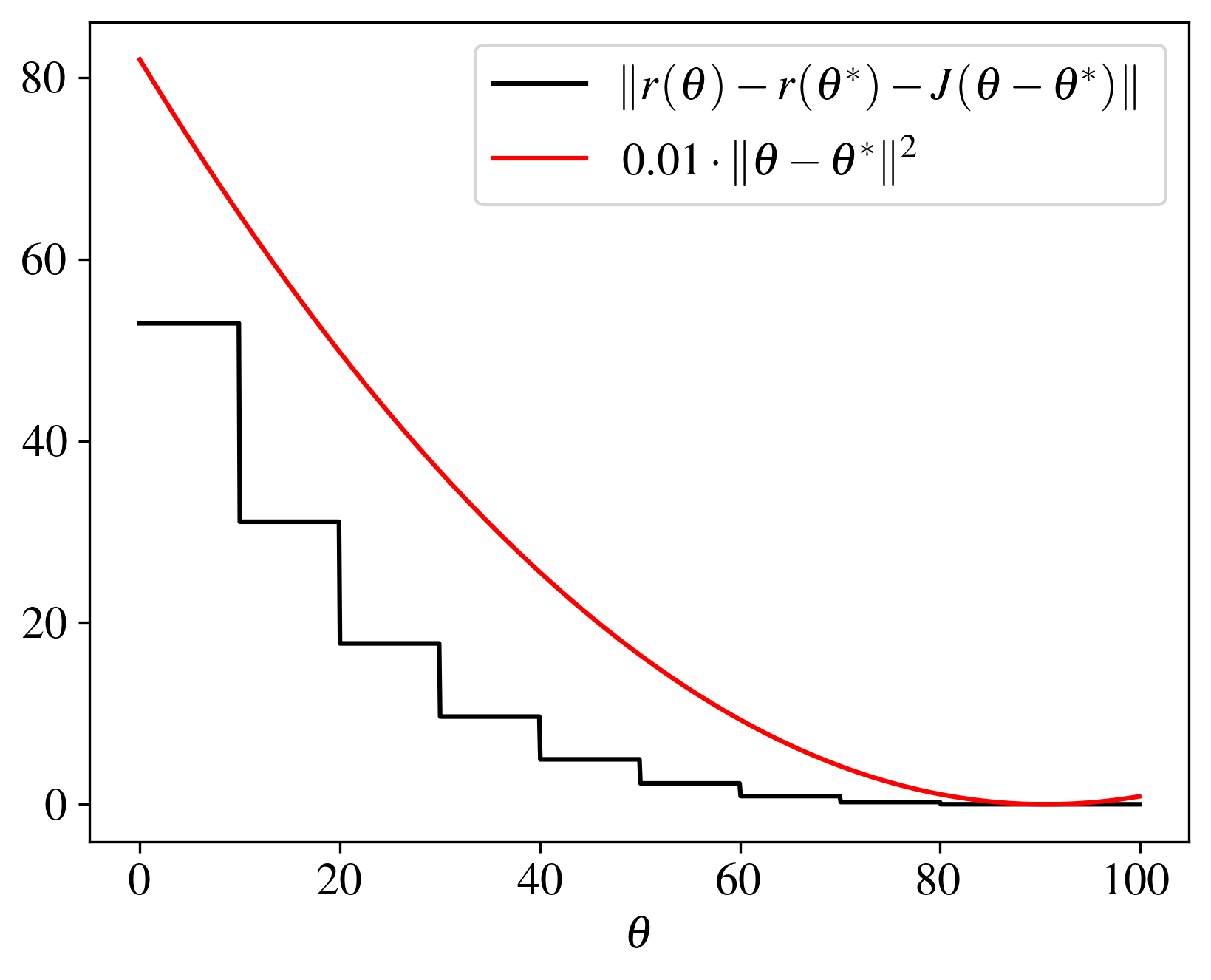}
\caption{Visualization of the region of attraction for the case of Figure~\ref{figure: pieces_BE} under our analysis. As we can clearly see from this graphical representation, the region of attraction from our analysis can be effectively larger than the one considered in~\cite{bertsekas2022}. }
\label{figure: region_attraction}
\end{figure}
Clearly, every component is piecewise affine and therefore convex~\cite{boyd2004convex}, because it is the sum of the identity map with the negative minimum of a finite collection of affine functions, one per admissible action. Consequently, the Bellman residual function is convex and continuous. Looking at the set of the admissible policies and based on the relation between $T$ and $T^{\pi}$, we can rewrite the Bellman residual function as follows
\begin{equation}\label{eq: Bellman residual function}
r(\theta) = \theta - \min_{\pi\in\Pi}\left\{ T^{\pi}\theta\right\} = \theta - \min_{\pi\in\Pi} \!\left\{g^{\pi} + \gamma P^{\pi} \theta
\right\}\,,
\end{equation}
where $T^{\pi}\theta = g^{\pi} + \gamma P^{\pi} \theta$ is an affine function of $\theta$.
Consequently, the Bellman residual function is piecewise affine since it is continuous and there exist $\vert \Pi \vert$ affine selection functions $\left\{\theta - T^{\pi}\theta\right\}_{\pi\in\Pi}$ such that $r(\theta)\in \left\{\theta - T^{\pi}\theta\right\}_{\pi\in\Pi}$ for all $\theta\in\mathbb{R}^n$.
Because of its piecewise affine structure, the Bellman residual function is globally Lipschitz continuous (Proposition 4.2.2 in~\cite{facchineipangvol12003}) and strongly semismooth everywhere (Proposition 7.4.7 in~\cite{facchineipangvol22003}). %It is indeed well-established in the nonsmooth convex optimization community that piecewise affine functions are globally Lipschitz continuous (see Proposition 4.2.2 in~\cite{facchineipangvol12003}) and strongly semismooth (see Proposition 7.4.7 in~\cite{facchineipangvol22003}). 

The following lemma characterizes the relation between greedy policies and active selection functions at $\theta\in\mathbb{R}^n$.
\begin{lemma}
Let $\tilde{\Pi}_{\theta}\subseteq \Pi$ denote the set of the greedy policies with respect to the cost-vector $\theta\in\mathbb{R}^n$. Then $r(\theta) = \theta - T^{\pi}\theta$ for all $\pi\in\tilde{\Pi}_{\theta}$. In other terms, $\left\{ \theta - T^{\pi}\theta \right\}_{\pi\in\tilde{\Pi}_{\theta}}$ is the collection of the active selection functions of $r$ at $\theta$.
\end{lemma}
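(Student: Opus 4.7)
The plan is to prove this lemma by directly unwinding the definitions of greedy policy, Bellman operator, and active selection function. The statement has two parts which correspond to two containments, and both reduce to the pointwise identity between the greedy minimization in~\eqref{eq: greedy policy} and the definition of $T\theta$.

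First, I would establish the forward direction: fix $\pi \in \tilde{\Pi}_\theta$ and show $r(\theta) = \theta - T^\pi \theta$. By the definition of greedy policy, $\pi(s) \in \arg\min_{a\in\mathcal{A}(s)} \{ g(s,a) + \gamma \sum_{s'\in\mathcal{S}} p_{ss'}(a)\theta(s') \}$ for every $s\in\mathcal{S}$. Evaluating both sides, this means
\begin{equation*}
(T^\pi \theta)(s) = g(s,\pi(s)) + \gamma \sum_{s'\in\mathcal{S}} p_{ss'}(\pi(s))\theta(s') = (T\theta)(s)
\end{equation*}
for every $s$, so $T^\pi \theta = T\theta$ and therefore $r(\theta) = \theta - T\theta = \theta - T^\pi \theta$. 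This immediately shows that every $\pi \in \tilde{\Pi}_\theta$ induces an active selection function in the sense of the definition given after Proposition~\ref{proposition B-differential of PC functions}, since the selection function $f_\pi(\cdot) := \cdot - T^\pi(\cdot)$ satisfies $f_\pi(\theta) = r(\theta)$.

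For the converse, I would take any $\pi \in \Pi$ such that the selection function $f_\pi$ is active at $\theta$, i.e., $r(\theta) = \theta - T^\pi \theta$, and show $\pi \in \tilde{\Pi}_\theta$. By definition of $r$ in~\eqref{eq: Bellman residual function}, this equality forces $T^\pi \theta = T\theta$ componentwise, i.e.\ for every $s\in\mathcal{S}$,
\begin{equation*}
g(s,\pi(s)) + \gamma \sum_{s'\in\mathcal{S}} p_{ss'}(\pi(s))\theta(s') = \min_{a\in\mathcal{A}(s)}\Bigl\{ g(s,a) + \gamma \sum_{s'\in\mathcal{S}} p_{ss'}(a)\theta(s')\Bigr\}.
\end{equation*}
This is exactly the statement that $\pi(s)$ attains the minimum in~\eqref{eq: greedy policy} for each $s$, hence $\pi \in \tilde{\Pi}_\theta$. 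Combining both directions yields the claimed equality between $\{ \theta - T^\pi \theta \}_{\pi \in \tilde{\Pi}_\theta}$ and the set of active selection functions of $r$ at $\theta$.

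I do not foresee any real technical obstacle: the lemma is essentially a translation between the minimization-over-actions formulation used in the definition of $T$ and the minimization-over-policies formulation used in~\eqref{eq: Bellman residual function}. The only mild point worth care is that I am working with \emph{active} (rather than \emph{essentially active}) selection functions, so no interior/closure argument is required—pointwise equality of values is all that is needed, and the finiteness of $\Pi$ together with the pointwise-in-$s$ decoupling of the minimization in~\eqref{eq: greedy policy} makes the two formulations equivalent without further work.
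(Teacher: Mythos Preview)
Your proposal is correct and takes essentially the same approach as the paper: both arguments reduce to the observation that $\pi$ is greedy with respect to $\theta$ if and only if $T^{\pi}\theta = T\theta$, which is immediate from the definition in~\eqref{eq: greedy policy}. The paper's proof is a one-line appeal to this definition, whereas you spell out both containments explicitly; your added care in distinguishing active from essentially active selection functions is appropriate but not needed here, since the lemma concerns only activity.
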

\begin{proof}
The proof follows directly from the definition of greedy policy~\eqref{eq: greedy policy}. In particular, a policy $\pi$ is greedy with respect to the cost-vector $\theta\in\mathbb{R}^n$ if  $T^{\pi}\theta = T\theta$.~~\QED
\end{proof}

The next definition introduces the concept of \textit{spurious greedy policy}, which will later be used together with Proposition~\ref{proposition B-differential of PC functions} to characterize the B-differential of the Bellman residual function.
\begin{definition}[spurious greedy policy]
Let $\bar{\theta}\in\mathbb{R}^n$. $\pi\in\tilde{\Pi}_{\bar{\theta}}$ is a spurious greedy policy for the cost-vector $\bar{\theta}$ if $\text{int}(\{ \theta\in\mathbb{R}^n \,:\, r(\theta) = \theta - T^{\pi}\theta  \} ) = \emptyset\,.$
\end{definition}

In other terms, a greedy policy $\pi\in\tilde{\Pi}_{\theta}$ is spurious if there exist $s\in\mathcal{S}$  for which for all $\epsilon>0$, $\pi(s)$ is not greedy with respect to any $\tilde{\theta}_s\neq \theta_s$ with $\vert \theta_s - \tilde{\theta}_s \vert \leq \epsilon$. We denote with $\tilde{\Pi}^S_{\theta}$ the subset of $\tilde{\Pi}_{\theta}$ comprising the spurious greedy policies.

The next proposition characterizes the B-differential of the Bellman residual function.
%Bellman B-differential & BD-regularity
\begin{proposition}\label{proposition of B-differential of Bellman residuals}
Let $r:\mathbb{R}^n \rightarrow \mathbb{R}^n$ be the Bellman residual function. The B-differential of $r$ at $\theta\in\mathbb{R}^n$ is the set
\begin{equation}\label{eq: B-differential of Bellman residuals}
\partial_B r(\theta) = \left\{ I - \gamma P^{\pi} \,|\, \forall \pi \in \tilde{\Pi}_{\theta}\setminus \tilde{\Pi}^{S}_{\theta} \right\}\,.
\end{equation}
In addition, $r$ is globally CD-regular.
\end{proposition}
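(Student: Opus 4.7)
The plan is to identify the Bellman residual $r$ as a PC$^1$ function and then apply Proposition~\ref{proposition B-differential of PC functions}. A natural finite family of selection functions is $\{r_\pi\}_{\pi\in\Pi}$ with $r_\pi(\theta) := \theta - T^\pi\theta = \theta - g^\pi - \gamma P^\pi\theta$, because by~\eqref{eq: Bellman residual function} we have $r(\theta)\in\{r_\pi(\theta)\}_{\pi\in\Pi}$ for every $\theta\in\mathbb{R}^n$, and each $r_\pi$ is affine, hence continuously differentiable, with Jacobian $r_\pi^\prime(\theta) = I - \gamma P^\pi$. Continuity of $r$ was already established in the paragraph preceding the lemma, so $r$ indeed fits the PC$^1$ definition.

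The second step is to identify the essentially active selection functions at a given $\theta$. By the lemma just proved, $r_\pi$ is active at $\theta$ iff $\pi\in\tilde{\Pi}_\theta$. To upgrade activity to essential activity I need to show that the set $\Omega_\pi := \{\theta'\in\mathbb{R}^n : r(\theta') = r_\pi(\theta')\}$ satisfies $\mathrm{cl}(\mathrm{int}(\Omega_\pi)) = \Omega_\pi$ whenever $\mathrm{int}(\Omega_\pi)\neq\emptyset$. This follows because $\Omega_\pi$ is a (closed, convex) polyhedron: it is defined by the finite family of linear inequalities $g(s,\pi(s)) + \gamma\sum_{s'}p_{ss'}(\pi(s))\theta'_{s'} \leq g(s,a) + \gamma\sum_{s'}p_{ss'}(a)\theta'_{s'}$ over $s\in\mathcal{S}$, $a\in\mathcal{A}(s)$. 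For a closed convex set with nonempty interior, closure and closure-of-interior coincide, so essential activity at $\theta$ is equivalent to $\pi\in\tilde{\Pi}_\theta$ \emph{and} $\mathrm{int}(\Omega_\pi)\neq\emptyset$, i.e., $\pi\in\tilde{\Pi}_\theta\setminus\tilde{\Pi}_\theta^S$. Proposition~\ref{proposition B-differential of PC functions} then yields $\partial_B r(\theta) = \{r_\pi^\prime(\theta) : \pi\in\tilde{\Pi}_\theta\setminus\tilde{\Pi}_\theta^S\} = \{I-\gamma P^\pi : \pi\in\tilde{\Pi}_\theta\setminus\tilde{\Pi}_\theta^S\}$, which is~\eqref{eq: B-differential of Bellman residuals}.

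For global CD-regularity, I use that $\partial r(\theta)$ is the convex hull of $\partial_B r(\theta)$. Any $J\in\partial r(\theta)$ can therefore be written as $J = I - \gamma Q$ with $Q = \sum_i \lambda_i P^{\pi_i}$, $\lambda_i\geq 0$, $\sum_i \lambda_i = 1$, and each $P^{\pi_i}$ row-stochastic. A convex combination of row-stochastic matrices is row-stochastic, so $Q\mathbf{1}_n = \mathbf{1}_n$ and $\|Q\|_\infty = 1$, whence $\rho(\gamma Q)\leq \gamma\|Q\|_\infty = \gamma < 1$. Therefore $1$ is not an eigenvalue of $\gamma Q$, so $I - \gamma Q$ is nonsingular. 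Since this holds at every $\theta\in\mathbb{R}^n$, $r$ is globally CD-regular.

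The main obstacle is the identification of essentially active selection functions with non-spurious greedy policies: one must recognize that each $\Omega_\pi$ is a closed polyhedron in order to pass from "$\theta\in\Omega_\pi$ and $\mathrm{int}(\Omega_\pi)\neq\emptyset$" to "$\theta\in\mathrm{cl}(\mathrm{int}(\Omega_\pi))$". The CD-regularity step is then a short spectral-radius argument exploiting $\gamma<1$ and the stochasticity of $P^\pi$.
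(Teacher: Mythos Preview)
Your proof is correct and follows essentially the same route as the paper's: identify $r$ as PC$^1$ with selection functions $r_\pi$, invoke Proposition~\ref{proposition B-differential of PC functions} to compute $\partial_B r(\theta)$, and use row-stochasticity of (convex combinations of) the $P^\pi$ together with $\gamma<1$ for CD-regularity. Your argument is in fact slightly more thorough: the paper asserts $\mathcal{F}_r(\theta)=\{r_\pi:\pi\in\tilde{\Pi}_\theta\setminus\tilde{\Pi}^S_\theta\}$ ``from the definition,'' whereas you justify the nontrivial direction---that $\theta\in\Omega_\pi$ and $\mathrm{int}(\Omega_\pi)\neq\emptyset$ imply $\theta\in\mathrm{cl}(\mathrm{int}(\Omega_\pi))$---by observing that $\Omega_\pi$ is a closed convex polyhedron.
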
 
\begin{proof}
From the definition of essentially active selection functions and spurious greedy policies, it follows that $\mathcal{F}_r(\theta) = \left\{ \theta - T^{\pi}\theta\,|\,\forall\pi\in\tilde{\Pi}_{\theta}\setminus\tilde{\Pi}^S_{\theta} \right\}$. From Proposition~\ref{proposition B-differential of PC functions} and since $\left(\theta - T^{\pi}\theta \right)' = I - \gamma P^{\pi}$ for any $\pi\in\Pi$, we conclude that the B-differential of $r$ is given by the set in~\eqref{eq: B-differential of Bellman residuals}. Since $P^{\pi}$ is a row-stochastic matrix, its eigenvalues lie within the unit circle of the complex plane. Thus $I-\gamma P^{\pi}$ with $\gamma \in (0,1)$ has no eigenvalue equal to zero. We can therefore conclude that all the matrices in the B-differential of $r$ are nonsingular and therefore $r$ is BD-regular. Finally, since the convex combination of row stochastic matrices is a row stochastic matrix, we can conclude that $r$ is CD-regular.~~\QED 
\end{proof}
\subsection{Policy Iteration}
\label{sec:PI and Newton}
%preamble of policy iteration and its importance as a DP method.
%As discussed in Section~\ref{sec:introduction}, the connection between PI and the semismooth Newton method is known since long time but, to the best of the authors' knowledge, we are the first to formalize it with the tools from generalized differentiation and semismooth analysis that we have reviewed and developed in Section~\ref{sec:background}. 
We start by introducing an assumption on the sets of the spurious greedy policies, which excludes the presence of selection functions that are active but not essentially active.
\begin{assumption}\label{assumption on spurious greedy policies}
We assume that
$\tilde{\Pi}_{\theta}^S = \emptyset$ for all $\theta \in \mathbb{R}^n$.
\end{assumption}
The following proposition characterizes the connection between PI and the semismooth Newton method.
\begin{proposition}\label{proposition PI-Newton}
Under Assumption~\ref{assumption on spurious greedy policies}, PI is an instance of the semismooth Newton method to solve the Bellman residual function~\eqref{eq: Bellman residual function}. Hence, the local contraction is quadratic.
\end{proposition}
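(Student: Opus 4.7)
The plan is to match PI's iterations with those of the semismooth Newton method applied to the Bellman residual $r(\theta)=\theta-T\theta$, and then invoke Corollary~\ref{corollary Newton local contraction} for the rate. Setting $\theta_k := V^{\pi_k}$, I would take as the Jacobian surrogate $J_k := I - \gamma P^{\pi_{k+1}}$, where $\pi_{k+1}$ is the policy produced by the policy improvement step of Algorithm~\ref{alg: PI}, i.e., any element of $\tilde{\Pi}_{\theta_k}$. Under Assumption~\ref{assumption on spurious greedy policies}, $\tilde{\Pi}^S_{\theta_k} = \emptyset$, so $\pi_{k+1}\in \tilde{\Pi}_{\theta_k}\setminus \tilde{\Pi}^S_{\theta_k}$, and Proposition~\ref{proposition of B-differential of Bellman residuals} then guarantees $J_k\in\partial_B r(\theta_k)\subseteq\partial r(\theta_k)$, making $J_k$ a legitimate choice to plug into iteration~\eqref{eq: Newton-type iteration closed-form}.

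Next I would verify algebraically that with this choice the Newton update returns exactly $V^{\pi_{k+1}}$, the next PI iterate. Using that $\pi_{k+1}$ is greedy at $\theta_k$, so that $T\theta_k = T^{\pi_{k+1}}\theta_k = g^{\pi_{k+1}} + \gamma P^{\pi_{k+1}}\theta_k$, one obtains $r(\theta_k) = (I-\gamma P^{\pi_{k+1}})\theta_k - g^{\pi_{k+1}}$, and substituting into~\eqref{eq: Newton-type iteration closed-form} collapses $\theta_{k+1}$ to $(I - \gamma P^{\pi_{k+1}})^{-1} g^{\pi_{k+1}} = V^{\pi_{k+1}}$. This establishes the first assertion: PI is step-for-step an instance of Algorithm~\ref{alg: semismooth Newton-type} with $B_k = J_k \in \partial r(\theta_k)$, i.e., of the exact semismooth Newton method.

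For the convergence rate, I would invoke Corollary~\ref{corollary Newton local contraction} directly: the discussion preceding the statement already established that $r$ is strongly semismooth everywhere (it is piecewise affine), and Proposition~\ref{proposition of B-differential of Bellman residuals} established that $r$ is globally CD-regular, so both hypotheses of the corollary are satisfied and local quadratic contraction follows. The delicate point, and the only reason Assumption~\ref{assumption on spurious greedy policies} is imposed, is the inclusion $J_k\in\partial r(\theta_k)$: if PI were allowed to select a spurious greedy policy $\pi_{k+1}$, then $I-\gamma P^{\pi_{k+1}}$ would typically fall outside the characterization of $\partial_B r$ given by Proposition~\ref{proposition of B-differential of Bellman residuals} and outside $\partial r(\theta_k)$, breaking the identification with Algorithm~\ref{alg: semismooth Newton-type} and invalidating the direct application of Corollary~\ref{corollary Newton local contraction}. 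Assumption~\ref{assumption on spurious greedy policies} rules this pathology out and is thus the only nontrivial ingredient beyond the short algebraic manipulation.
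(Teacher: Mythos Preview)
Your proposal is correct and follows essentially the same approach as the paper: identify $J_k = I - \gamma P^{\pi_{k+1}}$, use Assumption~\ref{assumption on spurious greedy policies} together with Proposition~\ref{proposition of B-differential of Bellman residuals} to place $J_k$ in $\partial_B r(\theta_k)$, carry out the same algebraic reduction of the Newton update to $(I-\gamma P^{\pi_{k+1}})^{-1}g^{\pi_{k+1}}$, and then invoke Corollary~\ref{corollary Newton local contraction} using strong semismoothness and CD-regularity of $r$. Your additional commentary on the role of Assumption~\ref{assumption on spurious greedy policies} is accurate and consistent with the paper's subsequent discussion.
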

\begin{proof}
Let $\left\{\theta^{\text{PI}}_{k}\right\}$ denote the iterates of Algorithm~\ref{alg: PI}. We show by induction that, through an appropriate choice of $J_k$, we can generate iterates $\left\{ \theta^{\text{N}}_k \right\}$ of the semismooth Newton method for the Bellman residual function such that $\theta^{\text{PI}}_k \!=\! \theta^{\text{N}}_k$ for all $k$. Assume that $\theta^{\text{PI}}_{k} = \theta^{\text{N}}_{k} = \theta_{k}$ and let $\pi_{k+1}\in \,\tilde{\Pi}_{\theta_k}$ be the greedy policy selected by PI at the $k$-th policy improvement step. Then, from Algorithm~\ref{alg: PI}, it follows that
$
\theta_{k+1}^{\text{PI}} = (I-\gamma P^{\pi_{k+1}})^{-1}g^{\pi_{k+1}}\,.
$
From Assumption~\ref{assumption on spurious greedy policies} and Proposition~\ref{proposition of B-differential of Bellman residuals}, we have that $I-\gamma P^{\pi_{k+1}}$ is invertible and belongs to $\partial_B r(\theta_k)$. Recall in addition that, from the definition of greedy policy, $T^{\pi_{k+1}}\theta_k = T\theta_k$. Therefore, the $(k+1)$-th semismooth Newton iterate with $J_k = I-\gamma P^{\pi_{k+1}}$ is
\begin{equation*}
\begin{aligned}
\theta^{\text{N}}_{k+1} &= \theta_k - (I-\gamma P^{\pi_{k+1}})^{-1} r(\theta_k)\\
&= \theta_k - (I-\gamma P^{\pi_{k+1}}) \left( \theta_k - g^{\pi_{k+1}} - \gamma P^{\pi_{k+1}}\theta_k\right)\\
&= \theta_k - (I-\gamma P^{\pi_{k+1}})^{-1}\left((I-\gamma P^{\pi_{k+1}})\theta_k - g^{\pi_{k+1}} \right)\\
&= (I-\gamma P^{\pi_{k+1}})^{-1}g^{\pi_{k+1}}\\
&= \theta^{\text{PI}}_{k+1}\,.
\end{aligned}
\end{equation*}
%We can conclude that PI is an instance of the semismooth Newton method, as it generates the same sequence of iterates when starting from the same initialization and when $J_k$ in~\eqref{eq: semismooth Newton iteration} is selected from $\partial_B r(\theta_k)$. 
The quadratic local contraction follows from Corollary~\ref{corollary Newton local contraction}.~~\QED
\end{proof}
The theoretical results of Proposition~\ref{proposition PI-Newton} are corroborated by extensive empirical evidence that suggests that, in practice, PI leads to faster convergence in terms of number of iterations than VI~\cite{bertsekas2012DPOC, gargiani21}. %This is though not highlighted by the classical DP theoretical analysis that only relies on the properties of the Bellman operator.
%In addition, despite its simplicity, the consequences of Proposition~\ref{proposition PI-Newton} are far-reaching, especially in view of the results in Theorem~\ref{theorem local contraction}. We can indeed develop novel efficient DP methods in the spirit of semismooth Newton-type methods, where the elements of the B-differential set are approximated by non-singular matrices that verify the kappa-condition.
Despite its simplicity, the consequences of Proposition~\ref{proposition PI-Newton} are far-reaching, especially in light of the results in Theorem~\ref{theorem local contraction}. We can develop novel DP methods in the spirit of semismooth Newton-type methods, where the elements in the B-differential are approximated with non-singular matrices that verify the kappa condition~\eqref{eq: kappa_condition}. Assumption~\ref{assumption on spurious greedy policies} allows to directly employ Proposition~\ref{proposition B-differential of PC functions} and could be further relaxed by considering only the iterates $\theta_k$ for $k\geq 0$. In addition, despite its technicality and limited intuitiveness, empirical evidence seems to suggests that it is realistic to assume that $\tilde{\Pi}^{S}_{\theta_k} = \emptyset$ for all $k\geq 0$.

%TODO : comment on Bertsekas analysis -- reference to the Fig. 1

%TODO : rephrase this section, comments on piecewise smooth Newton -- what are the differences.
By adopting the piecewise smooth Newton perspective (see Theorem 7.2.15 in~\cite{facchineipangvol22003}) we can recover similar results as in Proposition~\ref{proposition PI-Newton} without the need for Assumption~\ref{assumption on spurious greedy policies}. In particular, $J$ is selected in the larger set $\hat{\partial} r(\theta)\supseteq \partial_B r(\theta)$ that comprises the Jacobians of all the active selection functions at $\theta$. Clearly $\hat{\partial} r(\theta)$ also contains the Jacobians of the active selection functions associated with the spurious greedy policies.
With this approach, Assumption~\ref{assumption on spurious greedy policies} is replaced by the requirement that $\hat{\partial} r(\theta)$ is a \textit{strong Newton approximation scheme} (see Definition 7.2.2 in~\cite{facchineipangvol22003}). 

%TODO : comment on Bertsekas analysis -- reference to the Fig. 1
Also the analysis of Bertsekas in~\cite{bertsekas2022} leads to similar conclusions on the local convergence of PI. Unlike our analysis though, Bertsekas considers a neighborhood of the root where the active selection functions are a subset of those active at the root. This allows to remap the iterations to the Newton iterations applied to a system of differentiable equations that has the same fixed point. The downside of this approach is that the effective region of attraction is potentially much larger than the one considered for the technical proof.
% is potentially smaller since the results are valid only in a neighborhood of the root where the set of the active selection functions of the iterates is a subset of the set comprising the active selection functions at the root. 
A clear example is depicted in Figures~\ref{figure: pieces_BE} and~\ref{figure: region_attraction}.

\subsection{Value Iteration}
\label{sec:VI and Gradient}
In light of the equivalence between PI and the semismooth Newton method to solve~\eqref{eq: Bellman residual function}, we investigate the connection between VI and semismooth Newton-type methods. In particular, with the following proposition we show that VI is a semismooth Newton-type method where the elements in Clarke's generalized Jacobian are approximated with the identity matrix. 
\begin{proposition}\label{proposition VI and Newton-type}
VI is a semismooth Newton-type method to solve the Bellman residual function with $\left\{ B_k\right\} = \left\{ I\right\}$.
\end{proposition}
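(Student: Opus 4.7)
The plan is direct verification by substitution into Algorithm~\ref{alg: semismooth Newton-type}. The proposition amounts to unfolding definitions, so the argument is short and the only ingredient required is the explicit form $r(\theta) = \theta - T\theta$ of the Bellman residual function introduced at the start of Section~\ref{sec:newton-type DP}.

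First, I would observe that $B_k = I$ is trivially nonsingular, so this choice is admissible in Algorithm~\ref{alg: semismooth Newton-type} and the update~\eqref{eq: Newton-type iteration closed-form} reduces to
\[
\theta_{k+1} \;=\; \theta_k - I^{-1}\, r(\theta_k) \;=\; \theta_k - (\theta_k - T\theta_k) \;=\; T\theta_k.
\]
This is exactly the Picard-Banach iteration~\eqref{eq: value iteration} that defines VI, so, for any common initialization $\theta_0 = V_0$, the iterates produced by Algorithm~\ref{alg: semismooth Newton-type} with $B_k = I$ and by VI coincide term by term, which is the claimed equivalence.

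Second, I would note that this places VI within the fixed-point iteration subclass $B_k = \alpha_k I$ of semismooth Newton-type methods discussed in Section~\ref{sec:semismooth Newton}, specifically with $\alpha_k = 1$. There is no real obstacle in the proof itself: the content of the proposition is conceptual rather than technical, as it exposes VI as the extreme case in which no curvature information about $r$ is used. The payoff, to be exploited in Section~\ref{sec: alpha VI}, is that Theorem~\ref{theorem local contraction} can now be applied to VI by evaluating $\Vert I^{-1}(I - J_k) \Vert = \Vert I - J_k \Vert$ for $J_k = I - \gamma P^{\pi} \in \partial_B r(\theta_k)$, which recovers the $\gamma$-contraction and motivates accelerations obtained by varying the scaling factor.
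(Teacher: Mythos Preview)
Your proof is correct and follows essentially the same approach as the paper: both substitute $B_k = I$ into the Newton-type update~\eqref{eq: Newton-type iteration closed-form}, use $r(\theta_k) = \theta_k - T\theta_k$, and observe that the result $\theta_{k+1} = T\theta_k$ coincides with the VI iterate. The additional remarks you make about the fixed-point iteration subclass and the kappa condition mirror the discussion the paper places immediately after the proposition.
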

\begin{proof}
Let $\theta^{\text{VI}}_{k+1}$ and $\theta^{\text{N-type}}_{k+1}$ denote the $(k+1)$-th iterate of VI and the semismooth Newton-type method with $\left\{ B_k\right\} = \left\{ I\right\}$, respectively. Assume that $\theta^{\text{VI}}_{k} = \theta^{\text{N-type}}_{k} = \theta_{k}$. Then, from the definition of VI, it follows that
$
\theta_{k+1}^{\text{VI}} = T\theta_k\,.
$
From the definition of semismooth Newton-type iterate in~\eqref{eq: Newton-type iteration closed-form} and with the specific choice of $B_k = I$, we obtain that
$
\theta_{k+1}^{\text{N-type}} 
= \theta_k - I^{-1}r(\theta_k) = \theta_k - \left( \theta_k - T\theta_k \right)=  T\theta_k = \theta^{\text{VI}}_{k+1}\,.~~\QED
$
\end{proof}

The classical DP convergence analysis of VI based on the properties of the Bellman operator indicates that VI enjoys a global linear rate of convergence with a $\gamma$-contraction rate. In light of this novel connection between VI and the fixed-point iteration method, we can adopt the semismooth Newton-type theory perspective to study the local convergence of VI. In particular, from the results of Theorem~\ref{theorem local contraction}, we obtain that VI has a local linear contraction rate given by the discount factor as
$
\Vert I^{-1}\left( I - \left( I-\gamma P^{\pi} \right) \right) \Vert_{\infty} = \gamma \Vert  P^{\pi} \Vert_{\infty} = \gamma < 1$ for all $\pi\in\Pi\,.  
$
%TODO connect this result with those of Theorem 3.6
%TODO final conclusion: PI and VI two sides of the same spectrum -- what else??

\subsection{$\alpha$-Value Iteration}
\label{sec: alpha VI}
Proposition~\ref{proposition VI and Newton-type} shows that VI is also an instance of the fixed-point iteration method with $\alpha_k=1$ for all $k$. The question that naturally arises is what do the iterates of the fixed-point iteration method correspond to if we allow $\alpha_k\neq 1$. In this spirit, we propose to use $\alpha I$ with $\alpha > 0$ to approximate the elements in Clarke's generalized Jacobian.
 
The following lemma characterizes the iterates of this method, which we call $\alpha$-Value Iteration ($\alpha$-VI).
\begin{lemma}
Consider the semismooth Newton-type iteration for the Bellman residual function with $B_k = \alpha I$ and $\alpha>0$. Then
\vspace{-0.2cm}
\begin{equation}\label{eq: alpha-VI iteration}
\theta_{k+1} = \frac{\alpha-1}{\alpha}\theta_k + \frac{1}{\alpha} T \theta_k\,.
\end{equation}
\end{lemma}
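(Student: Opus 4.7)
The plan is to prove this lemma by direct substitution into the closed-form semismooth Newton-type update \eqref{eq: Newton-type iteration closed-form}, using the definition of the Bellman residual function $r(\theta) = \theta - T\theta$ introduced in \eqref{eq: Bellman residual function}. The lemma is essentially a one-line computation once one confirms that $B_k = \alpha I$ is admissible in Algorithm~\ref{alg: semismooth Newton-type}.

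First I would observe that since $\alpha > 0$, the matrix $B_k = \alpha I$ is nonsingular with $B_k^{-1} = \frac{1}{\alpha} I$, so the iteration is well-defined and coincides with the generic fixed-point iteration method mentioned in Section~\ref{sec:semismooth Newton}. Then, plugging $B_k = \alpha I$ and $r(\theta_k) = \theta_k - T\theta_k$ into \eqref{eq: Newton-type iteration closed-form}, one gets
\begin{equation*}
\theta_{k+1} = \theta_k - \frac{1}{\alpha}\bigl(\theta_k - T\theta_k\bigr) = \left(1 - \frac{1}{\alpha}\right)\theta_k + \frac{1}{\alpha}T\theta_k = \frac{\alpha - 1}{\alpha}\theta_k + \frac{1}{\alpha}T\theta_k,
\end{equation*}
which matches \eqref{eq: alpha-VI iteration}. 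There is no real obstacle here; the content of the lemma is simply to record the explicit form of the update so that $\alpha$-VI can be recognized as a convex combination (for $\alpha \geq 1$) or an extrapolated combination (for $\alpha < 1$) of the current iterate $\theta_k$ and its Bellman image $T\theta_k$. This rewriting is useful because it makes transparent both the reduction to standard VI when $\alpha = 1$ and the way $\alpha$ acts as a step-size-like parameter that can be tuned to accelerate convergence while preserving the per-iteration cost of VI.
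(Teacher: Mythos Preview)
Your proposal is correct and follows exactly the same route as the paper: substitute $B_k=\alpha I$ and $r(\theta_k)=\theta_k-T\theta_k$ into the closed-form update \eqref{eq: Newton-type iteration closed-form} and simplify. The paper's proof is essentially the one-line computation you wrote, without the additional commentary.
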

\vspace{0.1cm}
\begin{proof}
We start from the semismooth Newton-type iteration in~\eqref{eq: Newton-type iteration closed-form} and set $B_k = \alpha I$. The result trivially follows from the definition of the Bellman residual function as
$
\theta_{k+1} 
= \theta_k - \frac{1}{\alpha} (\theta_k - T\theta_k)= \frac{\alpha-1}{\alpha}\theta_k + \frac{1}{\alpha}T\theta_k\,.~~\QED
$
\end{proof}

Starting from Equation~\eqref{eq: alpha-VI iteration}, we can define the operator
$
T_{\alpha} = \frac{\alpha-1}{\alpha}\, I + \frac{1}{\alpha}\, T\,,
$
where $I$ is the indentity map and $T$ is the Bellman operator. Notice that when $\alpha=1$ we recover the Bellman operator and therefore $1$-VI is simply VI.
%TODO comments that for alpha =1 we recover VI!!
In the following, we are interested in studying the global and local convergence of $\alpha$-VI. We start by studying the properties of the $T_{\alpha}$ operator and its fixed-points.% both via the analysis of the properties of the operator $T_{\alpha}$ and via the tools of semismooth Newton-type theory.
\setlength{\belowcaptionskip}{-10pt}
\begin{figure}[h]
\centering
\includegraphics[width=0.45\textwidth]{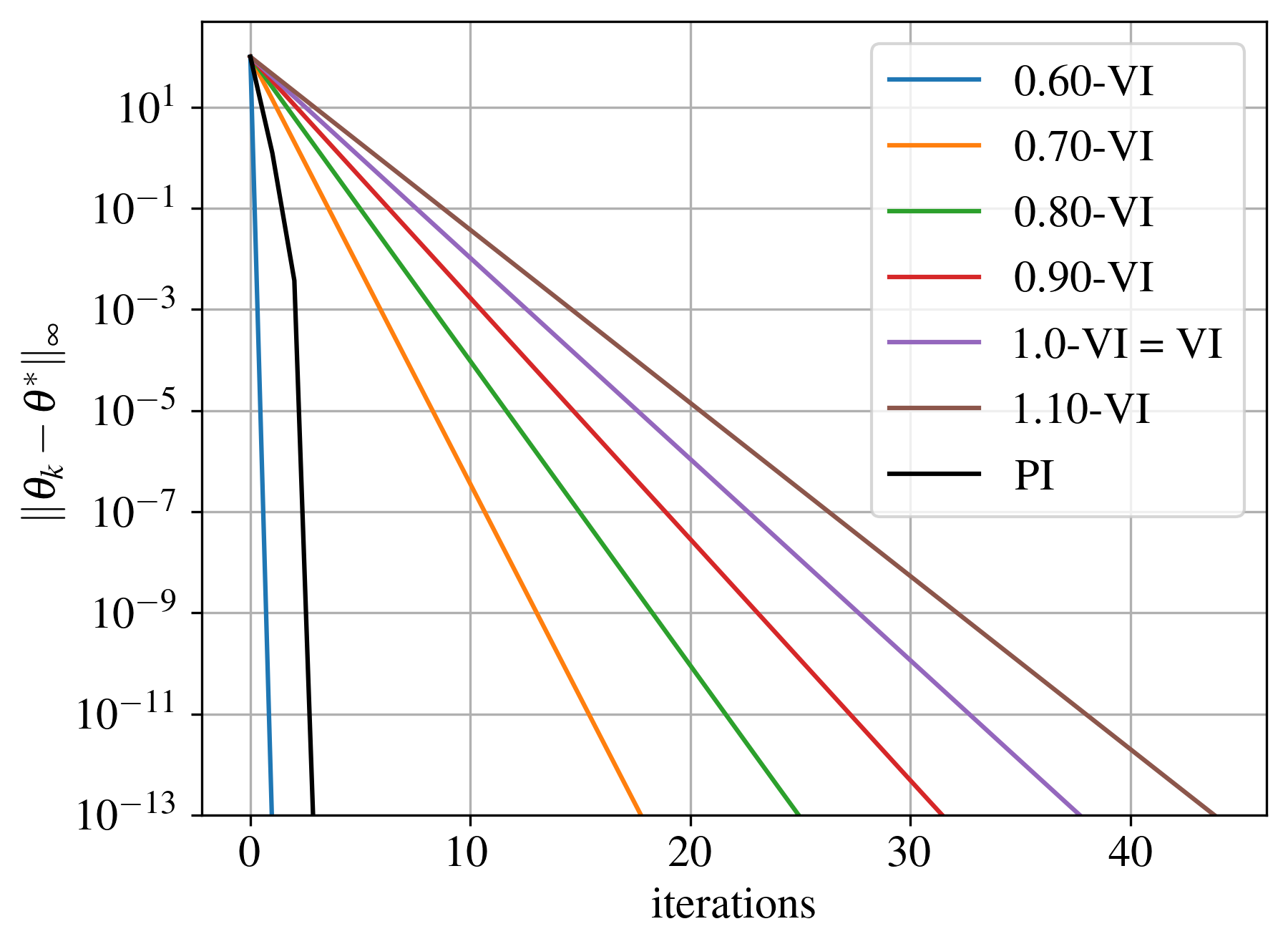}
\caption{Comparison of PI and $\alpha$-VI for different values of $\alpha$. For the benchmark, we consider a randomly generated MDP with $500$ states, $10$ actions and $\gamma=0.4$. In particular, the state transition matrices and the cost vectors are generated by sampling the values from a uniform distribution on the interval $[0,1)$.}
\label{figure: alpha_VI}
\end{figure}
\begin{proposition}\label{proposition contractivity of alpha-VI}
For any $\theta, \,\bar{\theta} \in \mathbb{R}^n$ and $\alpha > \frac{1+\gamma}{2}$,
$$ \big\Vert T_{\alpha}\theta - T_{\alpha}\bar{\theta} \big\Vert_{\infty} \leq \beta \big\Vert \theta - \bar{\theta} \big\Vert_{\infty}\,, $$
where $\beta = \frac{\vert\alpha-1\vert}{\alpha} + \frac{\gamma}{\alpha} <1$. In addition, the optimal cost $\theta^*$ is the unique fixed-point of $T_{\alpha}$.
%TODO operator is contractive for alpha \in ((1+gamma)/2, \infty), and has a unique fixed point at theta^*. In addition if alpha \geq 1 the operator is monotone.
\end{proposition}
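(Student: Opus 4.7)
The plan is to prove the contraction bound by a direct triangle-inequality computation and then invoke the Banach fixed-point theorem for uniqueness. First, I would rewrite the difference
\begin{equation*}
T_\alpha \theta - T_\alpha \bar{\theta} = \frac{\alpha-1}{\alpha}(\theta - \bar{\theta}) + \frac{1}{\alpha}(T\theta - T\bar{\theta}),
\end{equation*}
apply the triangle inequality and sub-multiplicativity of $\Vert\cdot\Vert_\infty$, and use the fact (already recalled in Section~\ref{sec:DP}) that the Bellman operator $T$ is a $\gamma$-contraction in the infinity norm to obtain
\begin{equation*}
\Vert T_\alpha \theta - T_\alpha \bar{\theta}\Vert_\infty \le \frac{\vert \alpha-1\vert}{\alpha}\Vert \theta-\bar{\theta}\Vert_\infty + \frac{\gamma}{\alpha}\Vert \theta-\bar{\theta}\Vert_\infty = \beta \Vert \theta - \bar{\theta}\Vert_\infty.
\end{equation*}

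The next step is to verify $\beta<1$ under the stated assumption $\alpha > \tfrac{1+\gamma}{2}$, splitting into two cases. If $\alpha\ge 1$, then $\vert\alpha-1\vert = \alpha-1$ and $\beta = 1 - \tfrac{1-\gamma}{\alpha} < 1$ since $\gamma<1$. If instead $\tfrac{1+\gamma}{2} < \alpha < 1$, then $\vert\alpha-1\vert = 1-\alpha$, giving $\beta = \tfrac{1-\alpha+\gamma}{\alpha}$, and the condition $\beta<1$ rearranges to $2\alpha > 1+\gamma$, which is exactly the hypothesis. This case split is the only slightly delicate point; the bound $\alpha > \tfrac{1+\gamma}{2}$ is sharp precisely because it is what is needed when $\alpha<1$.

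Finally, for the fixed-point part, I would directly verify that $V^*$ is a fixed point of $T_\alpha$: since $TV^* = V^*$,
\begin{equation*}
T_\alpha V^* = \frac{\alpha-1}{\alpha} V^* + \frac{1}{\alpha} T V^* = \frac{\alpha-1}{\alpha} V^* + \frac{1}{\alpha} V^* = V^*.
\end{equation*}
Uniqueness then follows immediately from the Banach fixed-point theorem applied to $T_\alpha$ on the complete normed space $(\mathbb{R}^n,\Vert\cdot\Vert_\infty)$, since the contraction estimate just established has constant $\beta<1$. The only real obstacle is bookkeeping the absolute value $\vert\alpha-1\vert$ in the two regimes $\alpha\gtrless 1$; the remainder of the argument is routine.
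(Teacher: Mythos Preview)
Your proposal is correct and follows essentially the same approach as the paper: decompose $T_\alpha\theta - T_\alpha\bar\theta$, apply the triangle inequality together with the $\gamma$-contractivity of $T$, verify $\beta<1$ via the case split $\alpha\ge 1$ versus $\tfrac{1+\gamma}{2}<\alpha<1$, check directly that $V^*$ is a fixed point, and invoke Banach for uniqueness. The only cosmetic difference is that the paper also examines $\alpha<0$ to argue the threshold is necessary, which is not required to prove the statement as formulated.
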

\begin{proof}
We start by showing that, if $\alpha > \frac{1+\gamma}{2}$, the operator is $\beta$-contractive with respect to the infinity norm. For any $\theta,\,\bar{\theta}\in\mathbb{R}^n$
\begin{equation*}
\begin{aligned}
\Vert T_{\alpha}\theta - &T_{\alpha}\bar{\theta} \Vert_{\infty} \!= \max_{s\in\mathcal{S}} \Big\vert \frac{\alpha-1}{\alpha}\!\left(\theta_s -  \bar{\theta}_s\right) \!+\! \frac{1}{\alpha}\!\left(T\!\left(\theta - \bar{\theta} \right)\right)\!(s)\Big\vert\\
&\overset{(a)}{\leq}\!\Big\vert\! \frac{\alpha-1}{\alpha} \!\Big\vert \max_{s\in\mathcal{S}} \!\Big\vert \theta_s \!-\! \bar{\theta}_s \Big\vert \!  + \!   \frac{1}{\vert \alpha\vert}\! \max_{s\in\mathcal{S}}\Big\vert \!\left( T\!\left(\theta - \bar{\theta}  \right)\!\right)\!\!(s)  \Big\vert\\
&\overset{(b)}{\leq} \left( \Big\vert\frac{\alpha-1}{\alpha}\Big\vert + \frac{\gamma}{\vert\alpha\vert} \right) \max_{s\in\mathcal{S}} \Big\vert \theta_s - \bar{\theta}_s \Big\vert\\
&= \,\left( \Big\vert\frac{\alpha-1}{\alpha}\Big\vert + \frac{\gamma}{\vert\alpha\vert} \right) \big\Vert \theta - \bar{\theta}  \big\Vert_{\infty}\,, 
\end{aligned}
\end{equation*}
where $(a)$ follows from the triangle inequality and $(b)$ from the fact that the Bellman operator is $\gamma$-contractive in the inifinity norm. In order for $T_{\alpha}$ to be contractive, we need $\left( \Big\vert\frac{\alpha-1}{\alpha}\Big\vert + \frac{\gamma}{\vert\alpha\vert} \right) <1$. For $\alpha \geq 1$, since $\gamma\in (0,1)$, $T_{\alpha}$ is contractive with rate $(\alpha-1)/\alpha + \gamma/\alpha$. For $\alpha \in (0,1)$, $\Big\vert \frac{\alpha-1}{\alpha}  \Big\vert + \frac{\gamma}{\vert \alpha \vert}  = \frac{1-\alpha}{\alpha} + \frac{\gamma}{\alpha} $ and $\frac{1-\alpha}{\alpha} + \frac{\gamma}{\alpha} <1$ if and only if $\alpha > \frac{1+\gamma}{2}$.  For $\alpha <0$, $\Big\vert \frac{\alpha-1}{\alpha}  \Big\vert + \frac{\gamma}{\vert \alpha \vert}= \frac{\alpha-1}{\alpha} - \frac{\gamma}{\alpha}$ and the inequality $\frac{\alpha-1}{\alpha} - \frac{\gamma}{\alpha}<1$ is never satisfied since $\gamma\in (0,1)$.  We can therefore conclude that if $\alpha > \frac{1+\gamma}{2}$ then $T_{\alpha}$ is $\beta$-contractive in the infinity norm with $\beta = \vert \alpha - 1 \vert/\alpha + \gamma/\alpha$. To verify that $\theta^*$ is a fixed-point of $T_{\alpha}$, we exploit the definition of $T_{\alpha}$ and the fact that $\theta^*$ is the unique fixed-point of $T$. In particular,
$
T_{\alpha}\theta^* = \frac{\alpha-1}{\alpha}\theta^* + \frac{1}{\alpha}T\theta^*
= \frac{\alpha-1}{\alpha}\theta^* + \frac{1}{\alpha}\theta^*
= \theta^*\,.
$
Uniqueness follows directly from the Banach Theorem~\cite{rockafellar76}.~~\QED
\end{proof}
The main implication of Proposition~\ref{proposition contractivity of alpha-VI} is that, if $\alpha>(1+\gamma)/2$, then $\alpha$-VI converges globally to the optimal cost $\theta^*$ with linear rate $\beta$.
The following lemmas characterize the values of $\alpha$ for which $T_{\alpha}$ is a monotone operator and its shift-invariance property, respectively.
\begin{lemma}[monotonicity]
Let $\alpha \geq 1$. For $\theta, \,\bar{\theta}\in \mathbb{R}^n$ if $\theta \leq \bar{\theta}$, then  $T_{\alpha}\theta \leq T_{\alpha}\bar{\theta}$.
\end{lemma}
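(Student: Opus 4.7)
The plan is to write $T_\alpha$ as a nonnegative combination of two monotone operators and then add the resulting componentwise inequalities. Concretely, the decomposition $T_\alpha = \tfrac{\alpha-1}{\alpha}\,I + \tfrac{1}{\alpha}\,T$ is already given, and the hypothesis $\alpha\geq 1$ ensures that both scalar weights $\tfrac{\alpha-1}{\alpha}\in[0,1)$ and $\tfrac{1}{\alpha}\in(0,1]$ are nonnegative (and in fact sum to $1$, so $T_\alpha$ is a convex combination of $I$ and $T$).

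First I would record the monotonicity of the Bellman operator $T$: for $\theta\leq \bar\theta$ (componentwise), each component $(T\theta)(s)=\min_{a\in\mathcal{A}(s)}\{g(s,a)+\gamma\sum_{s'} p_{ss'}(a)\theta(s')\}$ is a minimum of affine maps in $\theta$ whose linear parts have nonnegative coefficients $\gamma p_{ss'}(a)\geq 0$; each such affine map is thus nondecreasing in $\theta$, and taking the pointwise minimum preserves monotonicity. Hence $T\theta\leq T\bar\theta$. The identity $I$ is trivially monotone.

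Next, since $\tfrac{\alpha-1}{\alpha}\geq 0$ and $\tfrac{1}{\alpha}>0$ under $\alpha\geq 1$, multiplying the inequalities $\theta\leq\bar\theta$ and $T\theta\leq T\bar\theta$ by these nonnegative scalars preserves the order, and summing the two resulting componentwise inequalities yields
\[
T_\alpha \theta \;=\; \tfrac{\alpha-1}{\alpha}\theta + \tfrac{1}{\alpha}T\theta \;\leq\; \tfrac{\alpha-1}{\alpha}\bar\theta + \tfrac{1}{\alpha}T\bar\theta \;=\; T_\alpha\bar\theta,
\]
which is the claim.

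There is essentially no obstacle here; the only subtlety worth flagging in the write-up is why $\alpha\geq 1$ is needed rather than just $\alpha>(1+\gamma)/2$ as in Proposition~\ref{proposition contractivity of alpha-VI}: for $\alpha\in(0,1)$ the coefficient $\tfrac{\alpha-1}{\alpha}$ becomes negative, so the identity term reverses the inequality and the convex-combination argument breaks down. Thus $\alpha\geq 1$ is tight for this simple monotonicity proof.
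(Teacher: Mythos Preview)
Your proof is correct and follows exactly the same approach as the paper: both use the decomposition $T_\alpha=\tfrac{\alpha-1}{\alpha}I+\tfrac{1}{\alpha}T$, invoke the monotonicity of $T$, and observe that $\alpha\geq 1$ makes both coefficients nonnegative so the order is preserved. Your version simply spells out in more detail why $T$ is monotone and why the condition $\alpha\geq 1$ is needed, whereas the paper compresses this into a single displayed chain of inequalities.
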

\begin{proof} Since $\alpha\geq 1$, $\theta\leq \bar{\theta}$ and $T$ is monotone~\cite{bertsekas2012DPOC}, it follows that
%The result follows from the monotonicity of $T$ and the fact that, since $\alpha\geq 1$ and $\theta\leq \bar{\theta}$, $\frac{\alpha-1}{\alpha}\theta\leq \frac{\alpha-1}{\alpha}\bar{\theta}$
$
T_{\alpha}\theta = \frac{\alpha-1}{\alpha} \theta + \frac{1}{\alpha}T\theta\leq \frac{\alpha-1}{\alpha} \bar{\theta} + \frac{1}{\alpha}T\bar{\theta}= T_{\alpha}\bar{\theta}\,.~\QED
$
\end{proof}
\begin{lemma}[shift-invariance]
For any $\theta\in\mathbb{R}^n$ and $b\in\mathbb{R}$, then $T^{k}_{\alpha}\left(\theta + b \mathbf{1}_n \right) = T^k_{\alpha}\theta + \left(\frac{\alpha-1+\gamma}{\alpha}\right)^k b \mathbf{1}_n$ for $k=1,2,\dots$.
\end{lemma}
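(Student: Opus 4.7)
The plan is a straightforward induction on $k$, with the base case $k=1$ doing all the real work; the inductive step is then essentially algebraic.

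For the base case, I would first establish the known shift-invariance of the Bellman operator itself: $T(\theta + b\mathbf{1}_n) = T\theta + \gamma b \mathbf{1}_n$. This follows component-wise from the row-stochasticity condition $\sum_{s'\in\mathcal{S}} p_{ss'}(a) = 1$, which yields
\begin{equation*}
(T(\theta + b\mathbf{1}_n))(s) = \min_{a\in\mathcal{A}(s)}\left\{ g(s,a) + \gamma \sum_{s'\in\mathcal{S}} p_{ss'}(a)(\theta_{s'} + b) \right\} = (T\theta)(s) + \gamma b.
\end{equation*}
Substituting this into the definition $T_{\alpha} = \frac{\alpha-1}{\alpha} I + \frac{1}{\alpha} T$ gives
\begin{equation*}
T_{\alpha}(\theta + b\mathbf{1}_n) = \frac{\alpha-1}{\alpha}(\theta + b\mathbf{1}_n) + \frac{1}{\alpha}(T\theta + \gamma b \mathbf{1}_n) = T_{\alpha}\theta + \frac{\alpha-1+\gamma}{\alpha} b \mathbf{1}_n,
\end{equation*}
which proves the claim for $k=1$.

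For the inductive step, assuming the formula holds at some $k\geq 1$, I would apply $T_\alpha$ to both sides and use the base case with the shifted scalar $b' = \left(\frac{\alpha-1+\gamma}{\alpha}\right)^k b$:
\begin{equation*}
T^{k+1}_{\alpha}(\theta + b\mathbf{1}_n) = T_{\alpha}\!\left( T^k_{\alpha}\theta + \left(\tfrac{\alpha-1+\gamma}{\alpha}\right)^k b \mathbf{1}_n \right) = T^{k+1}_{\alpha}\theta + \left(\tfrac{\alpha-1+\gamma}{\alpha}\right)^{k+1}\!\! b \mathbf{1}_n,
\end{equation*}
which completes the induction.

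There is no real obstacle here; the only subtlety is recognizing that the constant factor $\gamma$ picked up by $T$ combines with the $(\alpha-1)/\alpha$ contribution from the identity part of $T_\alpha$ to produce the factor $(\alpha-1+\gamma)/\alpha$ in $T_\alpha$. Once that is seen at $k=1$, iterating commutes cleanly because applying $T_\alpha$ to a shift of the form $c\mathbf{1}_n$ just multiplies $c$ by $(\alpha-1+\gamma)/\alpha$, exactly the structure the inductive hypothesis requires.
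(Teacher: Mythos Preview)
Your proof is correct and follows essentially the same route as the paper: establish the $k=1$ case from the shift-invariance of $T$ and the definition of $T_\alpha$, then iterate. The only difference is that you spell out the row-stochasticity argument for $T(\theta+b\mathbf{1}_n)=T\theta+\gamma b\mathbf{1}_n$ and write the induction explicitly, whereas the paper cites this property and simply says the result follows by repeated application of $T_\alpha$.
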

\begin{proof}
Since $T$ is shift-invariant~\cite{bertsekas2012DPOC}, then
\begin{equation*}
\begin{aligned}
T_{\alpha}\left( \theta + b \mathbf{1}_n \right) &=  \frac{\alpha-1}{\alpha}\left(\theta + b\mathbf{1}_n \right)+ \frac{1}{\alpha}T\left(\theta + b\mathbf{1}_n \right)\\
&= \frac{\alpha - 1}{\alpha}\left(\theta + b\mathbf{1}_n \right)+ \frac{1}{\alpha}T\theta + \frac{\gamma}{\alpha} b\mathbf{1}_n \\
&= T_{\alpha}\theta + \frac{\alpha-1+\gamma}{\alpha}b\mathbf{1}_n\,.
\end{aligned}
\end{equation*}
The final result follows from repeatedly applying the $T_{\alpha}$ operator.~~\QED 
\end{proof}
Results similar to Proposition~\ref{proposition contractivity of alpha-VI} can be derived for the local contraction rate by considering Theorem~\ref{theorem local contraction} and evaluating the kappa condition with the infinity norm.
Unfortunately, using this type of analysis it is not possible to conclude that $\alpha$-VI improves over VI in terms of convergence rate. Instead, we introduce the following proposition, which analyses the asymptotic rate of convergence of $\alpha$-VI via local stability analysis of nonlinear systems. For the sake of simplicity and interpretability, we consider a simplified setting in which the transition probability matrix at the solution has only real and positive eigenvalues. Notice that similar considerations can be made in a more general setting. This approach provides a tighter bound on the local rate of convergence, but is only applicable in a neighborhood of the root where the Bellman residual function is continuously differentiable.

%local contraction rate with Lyapunov approach
\begin{proposition}[asymptotic local contraction rate]\label{proposition on asymptotic local contraction rate}
Assume that $r(\theta^*)$ is continuously differentiable in a neighborhood of $\theta^*$ and that $P^{\pi^*}$ has only real and positive eigenvalues. Let $\alpha \in (1/(1+\gamma), 1)$ and
\begin{equation}
\tilde{\beta} = 
\begin{cases}
1-\frac{1-\gamma}{\alpha}& \text{if }\alpha\in [1-\gamma/2, 1)\\
\frac{1}{\alpha}-1 & \text{if }\alpha \in (1/(1+\gamma), 1-\gamma/2)\,.
\end{cases}
\end{equation}
$\alpha$-VI converges linearly to $\theta^*$ with asymptotic contraction rate $\tilde{\beta}<\gamma$. 
\end{proposition}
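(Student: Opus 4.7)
The plan is to exploit the $C^1$ assumption near $\theta^*$ to reduce the $\alpha$-VI iteration to a purely affine linear recursion, and then compute its spectral radius in closed form.

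First, I would note that continuous differentiability of $r$ on a neighborhood $U$ of $\theta^*$ means that only one selection function is active throughout $U$. Equivalently, there is a unique greedy policy $\pi^*$ with respect to every $\theta \in U$, so that on $U$ we have $T\theta = g^{\pi^*} + \gamma P^{\pi^*}\theta$ and hence
\begin{equation*}
T_{\alpha}\theta \;=\; \frac{g^{\pi^*}}{\alpha} + M\theta, \qquad M \vcentcolon= \frac{(\alpha-1)I + \gamma P^{\pi^*}}{\alpha}.
\end{equation*}
Because $\theta^*$ is a fixed point of $T_\alpha$ by Proposition~\ref{proposition contractivity of alpha-VI}, the error $e_k \vcentcolon= \theta_k - \theta^*$ satisfies the linear recursion $e_{k+1} = M e_k$ as long as $\theta_k \in U$. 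The asymptotic contraction rate of this recursion is the spectral radius $\rho(M)$: for any $\varepsilon>0$ there is an equivalent norm in which $\|M\|\le\rho(M)+\varepsilon<1$ (take $\varepsilon$ small enough to still undercut $\gamma$), so iterates starting close enough to $\theta^*$ in that norm stay in $U$, converge to $\theta^*$, and do so with asymptotic rate $\rho(M)$.

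Next, I would compute $\rho(M)$ using the spectral mapping theorem. If $\lambda_1,\dots,\lambda_n$ are the eigenvalues of $P^{\pi^*}$, then the eigenvalues of $M$ are $\mu_i = \bigl((\alpha-1) + \gamma\lambda_i\bigr)/\alpha$. Since $P^{\pi^*}$ is row-stochastic its spectrum lies in the closed unit disc, and by hypothesis all $\lambda_i$ are real and positive, hence $\lambda_i \in (0,1]$. So $\rho(M) = \max_{\lambda\in[0,1]}\bigl\lvert (\alpha-1+\gamma\lambda)/\alpha\bigr\rvert$. Since this is a linear function of $\lambda$, its modulus on $[0,1]$ is maximized at one of the two endpoints: at $\lambda=0$ we get $(1-\alpha)/\alpha = 1/\alpha - 1$ (which is positive because $\alpha<1$), and at $\lambda=1$ we get $(\alpha-1+\gamma)/\alpha = 1 - (1-\gamma)/\alpha$ (which is positive because $\alpha > 1/(1+\gamma) > 1-\gamma$).

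Finally, I would do the elementary case split: equating the two candidates gives the threshold $\alpha = 1-\gamma/2$. For $\alpha \in [1-\gamma/2, 1)$ the $\lambda=1$ endpoint wins, yielding $\tilde\beta = 1 - (1-\gamma)/\alpha$; for $\alpha \in (1/(1+\gamma), 1-\gamma/2)$ the $\lambda=0$ endpoint wins, yielding $\tilde\beta = 1/\alpha - 1$. The inequality $\tilde\beta < \gamma$ then reduces in the first case to $\alpha < 1$ and in the second case to $\alpha > 1/(1+\gamma)$, both of which hold by hypothesis. The main obstacle will be to argue cleanly that the asymptotic rate of the linearized iteration really equals $\rho(M)$ and that the iterates remain in the differentiability neighborhood $U$; everything else is bookkeeping on a scalar linear function of $\lambda$.
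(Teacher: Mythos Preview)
Your proposal is correct and follows essentially the same approach as the paper: linearize $T_\alpha$ at $\theta^*$, identify the iteration matrix $M = I - \tfrac{1}{\alpha}(I-\gamma P^{\pi^*})$, bound its spectral radius via the endpoints $\lambda\in\{0,1\}$ of the eigenvalue range of $P^{\pi^*}$, and do the case split at $\alpha = 1-\gamma/2$. Your version is in fact slightly cleaner in that you exploit the piecewise-affine structure to observe that the linearization is \emph{exact} on $U$ (so no $\mathcal{O}(\|\theta_k-\theta^*\|^2)$ remainder is needed) and you make the ``iterates stay in $U$'' argument explicit via an equivalent norm; the only minor overclaim is that $C^1$ implies a \emph{unique} greedy policy---what it actually implies is that $T$ is affine on $U$ with slope $\gamma P^{\pi^*}$, which is all you use.
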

\begin{proof}
We start by linearizing $\theta_{k+1}= T_{\alpha}\theta_k$ at $\theta^*$ via the first-order Taylor expansion
\begin{equation*}
\theta^*\! + I (\theta_{k+1} - \theta^*) \!=\! T_{\alpha}\theta^* + (T_{\alpha}\theta^*)^{\prime} (\theta_k - \theta^*) + \mathcal{O}\left(\Vert \theta_k - \theta^* \Vert^2 \right).
\end{equation*}
Since $\theta^* = T_{\alpha}\theta^*$ and $(T_{\alpha}\theta^*)^\prime = \frac{(\alpha-1)}{\alpha} I + \frac{\gamma}{\alpha} P^{\pi^*}$ for any optimal policy $\pi^*$, then
\begin{equation*}
\theta_{k+1} - \theta^*  \!=\! \left(\! I \!- \!\frac{1}{\alpha}\left( I - \gamma P^{\pi^*}\right)\!\right) \!(\theta_k - \theta^*) + \mathcal{O}\!\left( \Vert \theta_k - \theta^* \Vert^2\right). 
\end{equation*}
Therefore the asymptotic convergence rate is determined by the spectral radius of $I - \frac{1}{\alpha}\left( I - \gamma P^{\pi^*}\right)$. In particular, since $\rho\left(I - \frac{1}{\alpha}\left( I - \gamma P^{\pi^*}\right) \right) \leq \max\left\{\big\vert 1-\frac{1-\gamma}{\alpha} \big\vert, \big\vert 1-\frac{1}{\alpha} \big\vert  \right\}$, we study different cases based on the values of $\alpha$. When $\alpha\geq 1-\gamma/2$, then $\max\left\{\big\vert 1-\frac{1-\gamma}{\alpha} \big\vert, \big\vert 1-\frac{1}{\alpha} \big\vert  \right\} = 1-\frac{1-\gamma}{\alpha}$. In this case we get a contraction for any $\alpha\geq 1-\gamma/2$ since the inequality $1-\frac{1-\gamma}{\alpha}<1$ is verified for any $\alpha>0$. In addition, if $\alpha \in [1-\gamma/2, 1]$, then we improve over the rate of VI since $1 - \frac{1-\gamma}{\alpha} \leq \gamma$. For $\alpha < 1-\gamma/2$, 
$\max\left\{ \big\vert 1-\frac{1-\gamma}{\alpha} \big\vert, \big\vert 1-\frac{1}{\alpha} \big\vert \right\} = \frac{1}{\alpha}-1$ and we get a contraction if $\alpha\in(1/2, 1-\gamma/2)$. In addition, if $\alpha\in [{1}/({1+\gamma}), 1-\gamma/2)$, then $\frac{1}{\alpha}-1 \leq \gamma$ and therefore we improve over the rate of VI.~~\QED
\end{proof}
%TODO check connection with mann iterations.
By combining the results of Propositions~\ref{proposition contractivity of alpha-VI} and~\ref{proposition on asymptotic local contraction rate} we obtain that, by setting $\max\left\{ \frac{1}{1+\gamma}, \frac{1+\gamma}{2} \right\}<\alpha<1$, $\alpha$-VI converges globally with a linear rate and its asymptotic linear rate of convergence is strictly better than that of VI. The numerical experiments in Figures~\ref{figure: alpha_VI} and \ref{figure: alpha_graph} corroborate our theoretical findings and demonstrate the competitive performance of $\alpha$-VI. In addition, since our analysis is not tight, in practice we obtain convergence for a wider range of $\alpha$ as depicted in Figure~\ref{figure: alpha_graph}. The code is available at \url{https://gitlab.ethz.ch/gmatilde/alphaVI}. 
\setlength{\belowcaptionskip}{-10pt}
\begin{figure}[h]
\centering
\includegraphics[width=0.45\textwidth]{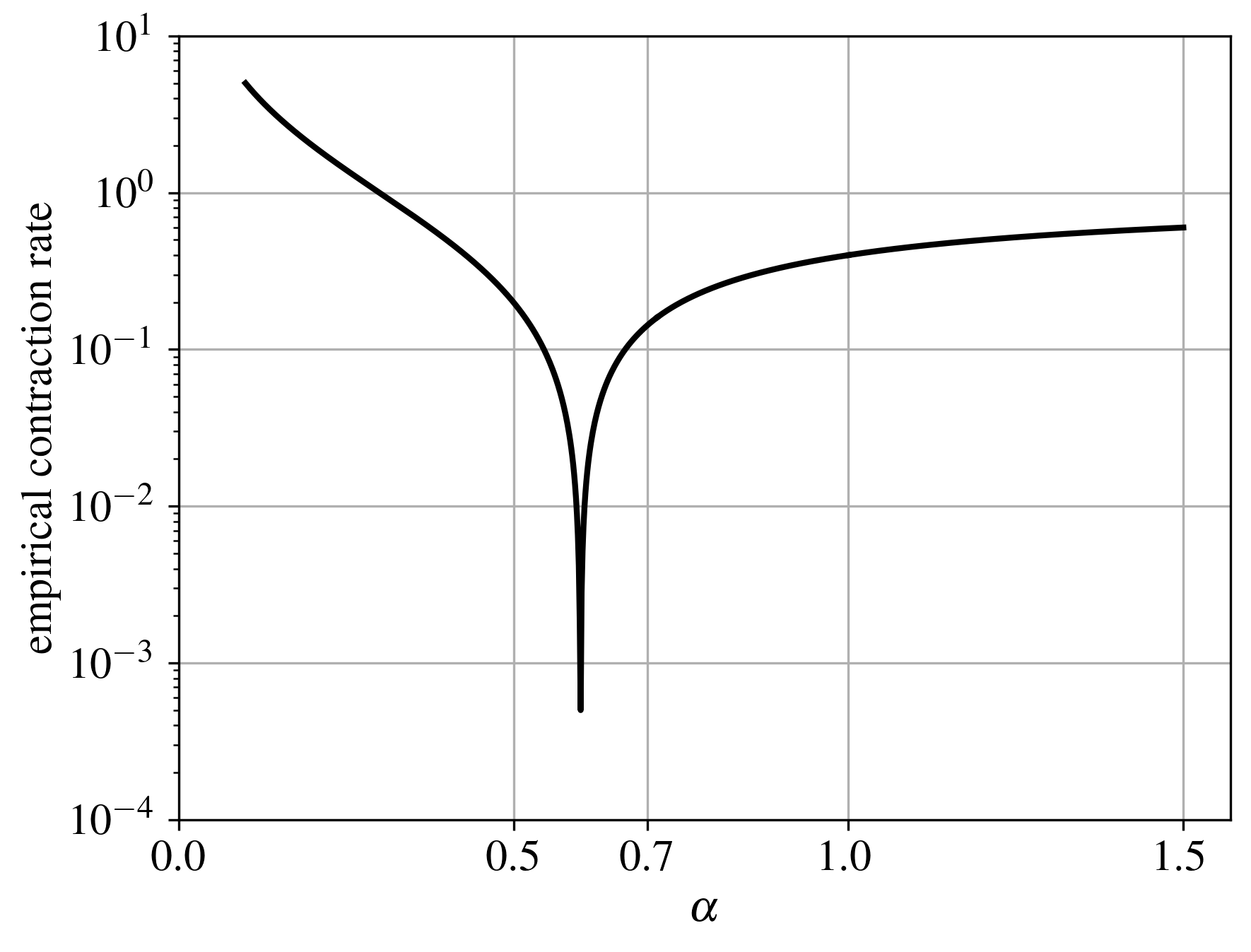}
\caption{Empirical global contraction rate of $\alpha$-VI for different values of $\alpha$ and comparison of $\alpha$-VI and PI for a randomly generated MDP with 500 states, 10 actions and $\gamma$ = 0.4.  The maximum acceleration is quite dramatic and is obtained for $\alpha\approx 0.6$.}
\label{figure: alpha_graph}
\end{figure}

\section{CONCLUSIONS \& FUTURE WORK}
We developed a unified convergence analysis for semismooth Newton-type methods based on the kappa condition. We then proved that PI and VI are semismooth Newton-type methods. 
In particular, Propositions~\ref{proposition PI-Newton} and~\ref{proposition VI and Newton-type} reveal that PI and VI sit at the two opposite sides in the spectrum of semismooth Newton-type methods: PI enjoys local quadratic contraction but its costs per iteration are demanding; instead, VI is based on a coarse approximation of the elements in Clarke's generalized Jacobian which allows to drastically reduce the costs per iteration at the price of downgrading the local quadratic convergence to a linear one. 
This connection has far-reaching consequences on the theoretical and algorithmic side.
We can both deploy the semismooth Newton-type theory to analyze the local convergence properties of existing DP methods and, taking inspiration from the existing semismooth Newton-type methods, design novel DP algorithms that achieve different trade-offs of local contraction rate and costs per iteration. 
In this spirit, we proposed an extension of VI with global convergence guarantees and asymptotically faster contraction rate. This novel locally accelerated version of VI comes with negligible additional computational costs and leads to great improvement in performance, as demonstrated by our numerical experiments.

Finally, another promising future direction consists in formalizing and exploiting the connection between inexact semismooth Newton methods and optimistic policy iteration-type algorithms.

%%%%%%%%%%%%%%%%%%%%%%%%%%%%%%%%%%%%%%%%%%%%%%%%%%%%%%%%%%%%%%%%%%%%%%%%%%%%%%%%

\bibliographystyle{plain} % We choose the "plain" reference style
\bibliography{refs}

\end{document}